\newcounter{rowcntr}[table]
\renewcommand{\therowcntr}{\alph{rowcntr})}
\newcolumntype{N}{>{\refstepcounter{rowcntr}\therowcntr}c}
\newtheorem{theorem}{Theorem}
\newtheorem{lemma}[theorem]{Lemma}
\newtheorem{proposition}[theorem]{Proposition}
\newtheorem{remark}[theorem]{Remark}
\newcommand{\ip}[2]{(#1\hspace*{.5mm},#2)}
\newcommand{\dual}[2]{\langle#1\hspace*{.5mm},#2\rangle}
\newcommand{\norm}[3][]{#1\|#2#1\|_{#3}}
\newcommand{\snorm}[2]{|#1|_{#2}}
\newcommand{\diam}{\mathrm{diam}}
\newcommand{\wat}{\widehat}
\def\div{{\rm div\,}}
\newcommand{\Hdivset}[1]{\boldsymbol{H}(\div;#1)}
\newcommand{\set}[2]{\big\{#1\,:\,#2\big\}}
\newcommand{\pwnabla}{\nabla_\TT}
\newcommand{\pwdiv}{\div_\TT}
\newcommand{\VnormQopt}[1]{\norm{#1}{\mathrm{V,qopt}}}
\newcommand{\VnormStd}[1]{\norm{#1}{\mathrm{V},1}}
\newcommand{\VnormSimple}[1]{\norm{#1}{\mathrm{V},2}}
\newcommand{\RT}{\ensuremath{\mathcal{RT}}}
\newcommand{\R}{\ensuremath{\mathbb{R}}}
\newcommand{\N}{\ensuremath{\mathbb{N}}}
\newcommand{\HH}{\ensuremath{{\boldsymbol{H}}}}
\newcommand{\LL}{\ensuremath{\boldsymbol{L}}}
\newcommand{\g}{\ensuremath{\boldsymbol{g}}}
\newcommand{\vv}{\ensuremath{\boldsymbol{v}}}
\newcommand{\ww}{\ensuremath{\boldsymbol{w}}}
\newcommand{\TT}{\ensuremath{\mathcal{T}}}
\newcommand{\cS}{\ensuremath{\mathcal{S}}}
\newcommand{\PP}{\ensuremath{\mathcal{P}}}
\newcommand{\OO}{\ensuremath{\mathcal{O}}}
\newcommand{\normal}{\ensuremath{{\boldsymbol{n}}}}
\newcommand{\ff}{\ensuremath{\boldsymbol{f}}}
\newcommand{\CC}{\ensuremath{\boldsymbol{C}}}
\newcommand{\bbeta}{\ensuremath{\boldsymbol{\beta}}}
\newcommand{\ssigma}{{\boldsymbol\sigma}}
\newcommand{\ttau}{{\boldsymbol\tau}}
\newcommand{\llambda}{{\boldsymbol\lambda}}
\newcommand{\cchi}{{\boldsymbol\chi}}
\newcommand{\uu}{\boldsymbol{u}}
\newcommand{\eeps}{\boldsymbol{\varepsilon}}
\newcounter{constantsnumber}
\def\setc#1{
  \ifthenelse{\equal{#1}{poinc}}{C_{\rm edge}}{ 
   \refstepcounter{constantsnumber}
   \label{const#1}C_{\theconstantsnumber}}}
\def\c#1{
  \ifthenelse{\equal{#1}{poinc}}{C_{\rm edge}}{ 
    C_{\ref{const#1}}}}
\newcommand{\xx}{{\boldsymbol{x}}}
\begin{document}

\title[Superconvergent DPG methods for second order elliptic problems]{Superconvergent DPG methods for second order elliptic problems}
\date{\today}

\author{Thomas F\"{u}hrer}
\address{Facultad de Matem\'{a}ticas, Pontificia Universidad Cat\'{o}lica de Chile, Santiago, Chile}
\email{tofuhrer@mat.uc.cl}

\thanks{{\bf Acknowledgment.} 
This work was supported by FONDECYT project 11170050.}

\keywords{DPG method, ultra-weak formulation, best approximation,
duality arguments, postprocessing, superconvergence}
\subjclass[2010]{65N30, 
                 65N12} 
\begin{abstract}
We consider DPG methods with optimal test functions and broken test spaces based on ultra-weak formulations of general
second order elliptic problems.
Under some assumptions on the regularity of solutions of the model problem and its adjoint, superconvergence for the
scalar field variable is achieved by either increasing the polynomial degree in the corresponding approximation space by
one or by a local postprocessing.
We provide a uniform analysis that allows to treat different test norms.
Particularly, we show that in the presence of convection only the quasi-optimal test norm leads to higher convergence
rates, whereas other norms considered do not.
Moreover, we also prove that our DPG method delivers the best $L^2$ approximation of the scalar field variable
up to higher order terms, which is the first theoretical explanation of an observation made previously by different authors.
Numerical studies that support our theoretical findings are presented.
\end{abstract}
\maketitle

\section{Introduction}
In this work we investigate convergence rates of DPG methods based on an ultra-weak formulation of second order elliptic
problems stated in the form of the general first-order system 
\begin{subequations}\label{eq:model}
\begin{alignat}{2}
  \nabla u -\bbeta u + \CC\ssigma &= \CC\ff &\quad&\text{in }\Omega, \label{eq:model:a} \\
  \div\ssigma + \gamma u &= f &\quad&\text{in }\Omega, \label{eq:model:b}\\
  u &=0 &\quad&\text{on }\Gamma:=\partial \Omega,
\end{alignat}
\end{subequations}
where $\Omega\subseteq \R^d$, $d\geq2$, is a polyhedral domain and 
$\CC\in L^\infty(\Omega)^{d\times d}$ denotes a symmetric, uniformly positive definite matrix valued function, 
$\bbeta \in L^\infty(\Omega)^d$, $\gamma \in L^\infty(\Omega)$.
Throughout we suppose that the coefficients additionally satisfy 
\begin{align}\label{eq:ass:coeff}
  L^\infty(\Omega) \ni \tfrac12 \div(\CC^{-1}\bbeta)+\gamma \geq 0,
\end{align}
which implies that for $f\in L^2(\Omega)$, $\ff\in\LL^2(\Omega):=L^2(\Omega)^d$ our model problem~\eqref{eq:model} 
admits a unique solution $(u,\ssigma)$ with $u\in H_0^1(\Omega)$, $\ssigma\in \Hdivset\Omega := \set{\ttau\in\LL^2(\Omega)}{\div\ttau\in
L^2(\Omega)}$.

In this work we consider DPG methods with optimal test functions and broken test spaces,
which have been introduced by Demkowicz \& Gopalakrishnan, see~\cite{partI,partII} and also~\cite{partIII,partIV}. 
For a unified stability analysis which also covers our model problem we refer to~\cite{breakSpace}.
We analyze ultra-weak formulations of~\eqref{eq:model}, which
are obtained by multiplying with locally supported functions and integration
by parts, see, e.g.,~\cite{DemkowiczG_11_ADM} for a Poisson model problem.
On the one hand, this has the advantage that the field variables can be sought in $L^2(\Omega)$, since no derivative
operator is applied to these unknowns after integration by parts.
On the other hand, this requires the introduction of trace variables $\wat u$, $\wat\sigma$ that live on the skeleton
(these unknowns impose weak continuity conditions).
However, as analyzed in the recent work~\cite{PolyDPG} the use of ultra-weak formulations also 
allows to define conforming finite element spaces on polygonal meshes.

The motivation of this work is to analyze superconvergence properties for approximations of the scalar field variable
$u$ that have been observed in our recent work~\cite{SupconvDPG} for a simple reaction-diffusion problem, where
$\CC$ is the identity matrix, $\bbeta = 0$, $\gamma=1$, and $\ff=0$.
Here we generalize and extend~\cite{SupconvDPG} to the model problem~\eqref{eq:model} and introduce new ideas that
allow to treat different test norms.
As in~\cite{SupconvDPG}, the proofs rely on duality arguments and regularity theory for elliptic PDEs. 
Such arguments are common when proving higher convergence rates, e.g., the classical Aubin-Nitsche trick, or more
recently in variants of DG methods, e.g.~\cite{CockburnDG_superconvergence}.

Let us also mention the recent works~\cite{DPGstar,GoalOrientedDPG} that deal with dual
problems in the context of DPG methods (the DPG$^*$ method and goal-oriented problems).
Particularly, we point out the reference~\cite{BoumaGH_DPGconvRates}. There the authors consider a primal DPG method
(without the first-order reformulation) for the Poisson problem and analyze convergence rates 
(with reduced degrees in test spaces). Moreover, they develop duality arguments and prove that the 
error in the primal variable $u$ converges at a higher rate when measured in a weaker norm.

\subsection{Summary of results}
We seek approximations $u_h\in\PP^p(\TT)$, $\ssigma_h\in\PP^p(\TT)^d$ of the 
field variables $u$, $\ssigma$, where $\TT$ is a mesh of simplices and $\PP^p(\TT)$ denotes the space of $\TT$-piecewise
polynomials of degree less than or equal to $p\in\N_0$, and approximations $\wat u_h$, $\wat\sigma_h$ of the traces $\wat u$, $\wat\sigma$ in
spaces that will be defined later on.
For sufficient regular solutions basic a priori analysis arguments give the estimate
\begin{align*}
  \norm{\uu-\uu_h}U \simeq \norm{u-u_h}{} + \norm{\ssigma-\ssigma_h}{} + \norm{(\wat u-\wat u_h,\wat\sigma-\wat\sigma_h)}{\cS} 
  = \OO(h^{p+1}),
\end{align*}
where $\norm{\cdot}{}$ denotes the $L^2(\Omega)$ norm and $\norm\cdot{\cS}$ is some appropriate norm for the traces.
This estimate is optimal, since we seek approximations of $u$ and $\ssigma$ in polynomial spaces of the same order 
and their errors are measured in $L^2(\Omega)$ norms.
Nevertheless, it is unsatisfactory to some extent. Consider $\CC$ the identity, $\bbeta=0$, $\ff=0$
in~\eqref{eq:model}. Then, $\ssigma=\nabla u$ and we seek approximations of $u$ and its gradient $\ssigma$ in 
polynomial spaces of the same order, which seems to be suboptimal.
Fortunately, we can prove at least two possibilities to achieve higher convergence rates 
under some assumptions on the regularity of solutions of~\eqref{eq:model} and its adjoint problem:
\begin{itemize}
  \item \emph{Augmenting the trial space:} Instead of seeking approximations $u_h\in\PP^p(\TT)$ we seek approximations
    $u_h^+\in\PP^{p+1}(\TT)$ and show that
    \begin{align*}
      \norm{u-u_h^+}{} = \OO(h^{p+2}).
    \end{align*}
  \item \emph{Postprocessing:} We use a postprocessing technique that goes at least back to~\cite{stenbergPostPr} to obtain an
    approximation $\widetilde u_h\in\PP^{p+1}(\TT)$ and prove that
    \begin{align*}
      \norm{u-\widetilde u_h}{} = \OO(h^{p+2}).
    \end{align*}
\end{itemize}
Based on similar techniques we also provide a proof of the following:
\begin{itemize}
  \item \textit{DPG for ultra-weak formulations delivers the $L^2(\Omega)$ best approximation up to a higher order term}, 
    i.e., for the approximation
    $u_h\in\PP^p(\TT)$ it holds
    \begin{align*}
      \norm{u-u_h}{} \leq \norm{u-\Pi^p u}{} + \OO(h^{p+2}),
    \end{align*}
    where $\Pi^p$ denotes the $L^2(\Omega)$ projection to $\PP^p(\TT)$.
\end{itemize}
The latter observation is quite interesting, because it shows that even though we do not aim for higher convergence
rates (by increasing the polynomial degree in the trial space or by postprocessing) we get highly accurate
approximations.
We stress that this result has been observed in various numerical experiments, particularly also for more complex model
problems like Stokes~\cite{DPGStokes}, but up to now a rigorous proof has not been given.

If $\bbeta=0$, we show that these results hold true when using different test norms (one of them is the so-called quasi-optimal
test norm). 
Surprisingly (at this point), 
for $\bbeta\neq0$ the results are only valid if the quasi-optimal test norm is used, although all test
norms under consideration are equivalent. This is also observed in our numerical studies.

\subsection{Basic ideas}
For the proofs of the main results, we develop duality arguments and show approximation results 
(Lemma~\ref{lem:w:rep} and Lemma~\ref{lem:est}).
To get the essential idea, consider the abstract formulation: Find $\uu\in U$ such that
\begin{align*}
  b(\uu,\vv) = F(\vv) \quad\text{for all }\vv\in V,
\end{align*}
where $U$ denotes the trial space and $V$ the test space. With the trial-to-test operator $\Theta : U\to V$,
\begin{align*}
  \ip{\Theta\ww}\vv_V = b(\ww,\vv) \quad\text{for all }\vv\in V,
\end{align*}
the ideal DPG method reads: Find $\uu_h \in U_h\subset U$ such that
\begin{align*}
  b(\uu_h,\Theta\ww_h) = F(\Theta\ww_h) \quad\text{for all }\ww_h\in U_h.
\end{align*}
Then, we solve a dual problem:
For some given $g\in L^2(\Omega)$, we determine
$\vv\in V$ and $\ww = \Theta^{-1}\vv\in U$, both unique, and employ Galerkin orthogonality to obtain
\begin{align*}
  \ip{u-u_h}{g} = b(\uu-\uu_h,\vv) = b(\uu-\uu_h,\Theta\ww) = b(\uu-\uu_h,\Theta(\ww-\ww_h)) 
  \lesssim \norm{\uu-\uu_h}U\norm{\ww-\ww_h}U.
\end{align*}
for arbitrary $\ww_h\in U_h$.

For the case, where we want to show that the approximation $u_h\in\PP^p(\TT)$ is nearly the
$L^2(\Omega)$ best approximation, we have $g = \Pi^p(u-u_h)$.
Therefore,
\begin{align*}
  \norm{g}{}^2 = \ip{u-u_h}{g} \lesssim \norm{\uu-\uu_h}U\norm{\ww-\ww_h}U \lesssim \norm{\uu-\uu_h}U h
  \norm{g}{}.
\end{align*}
The latter estimate is what we have to show. Suppose that it holds. With the estimate for $\norm{\uu-\uu_h}U$ from
above, it is straightforward to see that
\begin{align*}
  \norm{u-u_h}{} \leq \norm{u-\Pi^p u}{} + \norm{\Pi^p u-u_h}{} =  \norm{u-\Pi^p u}{} + \norm{g}{} 
  = \norm{u-\Pi^p u}{} + \OO(h^{p+2}).
\end{align*}

Let us come back to the essential estimate
\begin{align*}
  \norm{\ww-\ww_h}U \lesssim h \norm{g}{}.
\end{align*}
It holds if we would know that 
the higher derivatives of $\ww$ exist (in some sense) and can be bounded by the norm of $g$, so that,
formally,
\begin{align*}
  \norm{\ww-\ww_h}U \lesssim h \norm{D^\mathrm{higher}\ww}{} \lesssim h \norm{g}{}
\end{align*}
by some standard arguments. In our case we have that $\vv \in H_0^1(\Omega)\times \Hdivset\Omega \subset V$ 
is the solution to the adjoint problem of~\eqref{eq:model} and under some assumptions has the higher regularity $\vv
\in H^2(\Omega)\times \HH^1(\TT)\cap\Hdivset\Omega$, where $\HH^1(\TT)$ denotes $\TT$-piecewise Sobolev functions.
Recall that $\ww = \Theta^{-1}\vv$. One difficulty is that the inverse of the trial-to-test operator does not map regular
functions back to regular functions. 
However, it turns out (Lemma~\ref{lem:w:rep}) that $\ww$ can be written as
\begin{align*}
  \ww = (g,0,0,0) + \widetilde\ww + \ww^\star,
\end{align*}
where components of $\widetilde\ww \in U$ are connected to the dual solution $\vv$, which is sufficient regular
and $\ww^\star$ is the solution of the (primal) problem~\eqref{eq:model} with data $f$ and $\ff$ depending on the dual
solution $\vv$ so that $\ww^\star$ has sufficient regularity as well.
Let us point out that this idea used in the proofs is new and allows to treat different test norms.
In~\cite{SupconvDPG}, which deals with a simple reaction-diffusion problem and one specific test norm only, 
the representation of $\ww$ is obtained by integration by parts using the dual solution $\vv$ and it is not clear if
that approach can be generalized to the present setting.
Here, in the general case we have to consider the regularity of the dual solution $\vv$ and the
regularity of the solution $\ww^\star$ of the primal problem.
For the proofs it is also necessary that $g$ is a function in the 
finite element space, so that we can choose $\ww_h = (g,0,0,0) + \overline\ww_h$, where $\overline\ww_h$ is the best
approximation of $\widetilde\ww+\ww^\star$. Then, we show that the above estimates hold true.

Let us note that $\Theta$ is defined through the inner product in the test space. Thus,
the representation of $\ww = \Theta^{-1}\vv$ from above strongly depends on the choice of the test norm 
and has to be analyzed for each norm individually (this is done in Lemma~\ref{lem:w:rep}).

Moreover, the ideas so far dealt with the ideal DPG method.
In this paper we work out all results for the practical DPG method under standard assumptions, i.e, the existence of Fortin operators.
This implies that we have to deal with additional discretization errors.

\subsection{Outline}
The remainder of the paper is organized as follows:
Section~\ref{sec:main} introduces basic notations, states the assumptions, and presents the main results
(Theorem~\ref{thm:L2}--\ref{thm:postproc}).
The proofs of these theorems are postponed to Section~\ref{sec:proof}, which also includes 
an a priori convergence estimate (Theorem~\ref{thm:approx}) and the important auxiliary results
Lemma~\ref{lem:w:rep},~\ref{lem:est}.
In Section~\ref{sec:ex} we present two numerical experiments.
The final Section~\ref{sec:remarks} concludes this work with some remarks.

\section{Main results}\label{sec:main}

\subsection{Notation}
We make use of the notation $\lesssim$, i.e., $A\lesssim B$ means that there exists a constant $C>0$, which is
independent of relevant quantities, such that $A\leq C B$.
Moreover, $A\simeq B$ means that both directions hold, i.e., $A\lesssim B$ and $B\lesssim A$.

\subsection{Mesh}
Let $\TT$ denote a regular mesh of $\Omega$ consisting of simplices $T$ and let $\cS := \set{\partial T}{T\in\TT}$
denote the skeleton. We suppose that $\TT$ is shape-regular, i.e., there exists a constant $\kappa_\TT>0$ such that
\begin{align*}
  \max_{T\in\TT} \frac{\diam(T)^d}{|T|} \leq \kappa_\TT,
\end{align*}
where $|T|$ denotes the volume measure of $T\in\TT$. As usual $h := h_\TT := \max_{T\in\TT} \diam(T)$ denotes the
mesh-size.

\subsection{Ultra-weak formulation}
Before we derive the ultra-weak formulation of~\eqref{eq:model} in this section, we introduce some notation.
Let $T\in\TT$. 
We denote by $\ip\cdot\cdot_T$ the $L^2(T)$ scalar product and with $\norm\cdot{T}$ the induced norm.
On boundaries $\partial T$, the $L^2(\partial T)$ scalar product is denoted by $\dual\cdot\cdot_{\partial T}$ and
extended to the duality between the spaces $H^{1/2}(\partial T)$ and $H^{-1/2}(\partial T)$.
Furthermore, we define the piecewise trace operators 
\begin{alignat*}{2}
  \gamma_{0,\cS} &: H^1(\Omega) \to \prod_{T\in\TT} H^{1/2}(\partial T), \quad &(\gamma_{0,\cS} v)|_{\partial T} =&
  v|_{\partial T}, \\
  \gamma_{\normal,\cS} &: \Hdivset\Omega \to \prod_{T\in\TT} H^{-1/2}(\partial T), 
  \quad &(\gamma_{\normal,\cS} \ttau)|_{\partial T} =& \ttau\cdot\normal_T|_{\partial T},
\end{alignat*}
where $\normal_T$ denotes the normal on $\partial T$ pointing from $T$ to its complement.
With these operators we define the trace spaces
\begin{align*}
  H_0^{1/2}(\cS) := \gamma_{0,\cS}(H_0^1(\Omega)), \quad\text{and}\quad
  H^{-1/2}(\cS) := \gamma_{\normal,\cS}(\Hdivset\Omega).
\end{align*}
These Hilbert spaces are equipped with minimum energy extension norms
\begin{align*}
  \norm{\wat u}{1/2,\cS} := \inf\set{\norm{u}{H^1(\Omega)}}{\gamma_{0,\cS}u = \wat u}, \quad
  \norm{\wat\sigma}{-1/2,\cS} := \inf\set{\norm{\ssigma}{\Hdivset\Omega}}{\gamma_{\normal,\cS}\ssigma = \wat\sigma}.
\end{align*}

We use the broken test spaces 
\begin{align*}
  H^1(\TT) &:= \set{v\in L^2(\Omega)}{v|_T \in H^1(T) \text{ for all }T\in\TT}, \\
  \Hdivset\TT &:= \set{\ttau\in\LL^2(\Omega)}{\ttau|_T \in\Hdivset{T} \text{ for all }T\in\TT}
\end{align*}
and define the piecewise differential operators $\pwnabla : H^1(\TT) \to \LL^2(\Omega)$, $\pwdiv : \Hdivset\TT \to
L^2(\Omega)$ on each $T\in\TT$ by
\begin{align*}
  \pwnabla v|_T := \nabla(v|_T), \quad \pwdiv\ttau|_T := \div(\ttau|_T).
\end{align*}
Moreover, we define the dualities
\begin{align*}
  \dual{\wat u}{\ttau\cdot\normal}_\cS := 
  \sum_{T\in\TT} \dual{\wat u|_{\partial T}}{\ttau\cdot\normal_T|_{\partial T}}_{\partial T},
  \quad
  \dual{\wat\sigma}v_\cS := \sum_{T\in\TT} \dual{\wat\sigma|_{\partial T}}{v|_{\partial T}}_{\partial T}
\end{align*}
for all $\wat u\in H_0^{1/2}(\cS)$, $\ttau\in\Hdivset\TT$, $\wat\sigma\in H^{-1/2}(\cS)$, $v\in H^1(\TT)$.
These dualities measure the jumps of $\vv =(v,\ttau)\in H^1(\TT)\times \Hdivset\TT$, i.e.,
\begin{subequations}\label{eq:jumps}
\begin{alignat}{2}
  v\in H_0^1(\Omega) &\Longleftrightarrow \dual{\widehat \sigma}v_\cS = 0 &\quad&\text{for all } \widehat\sigma \in
  H^{-1/2}(\cS), \\
  \ttau\in \Hdivset\Omega &\Longleftrightarrow \dual{\widehat u}{\ttau\cdot\normal}_\cS = 0 
  &\quad&\text{for all }\widehat u\in H_0^{1/2}(\cS),
\end{alignat}
\end{subequations}
see, e.g.,~\cite[Theorem~2.3]{breakSpace}.

The ultra-weak formulation is then derived from~\eqref{eq:model} by testing~\eqref{eq:model:a} with
$\ttau\in\Hdivset\TT$,~\eqref{eq:model:b} with $v\in H^1(\TT)$, and piecewise integration by parts,
i.e.,
\begin{align*}
  -\ip{u}{\pwdiv\ttau} + \dual{\gamma_{0,\cS}u}{\ttau\cdot\normal}_\cS
  -\ip{\bbeta u}\ttau + \ip{\CC\ssigma}\ttau &= \ip{\CC\ff}\ttau, \\
  -\ip{\ssigma}{\pwnabla v} + \dual{\gamma_{\normal,\cS}\ssigma}v_\cS
  +\ip{\gamma u}v &= \ip{f}v.
\end{align*}
Here, $\ip\cdot\cdot := \ip\cdot\cdot_\Omega$ is the $L^2(\Omega)$ scalar product with norm $\norm\cdot{}$.
Set
\begin{align*}
  U := L^2(\Omega)\times \LL^2(\Omega) \times H_0^{1/2}(\cS) \times H^{-1/2}(\cS), \quad V := H^1(\TT)\times \Hdivset\TT
\end{align*}
and define $F : V\to \R$ and $b : U\times V\to\R$ by
\begin{align*}
  F(\vv) &:= \ip{f}v + \ip{\ff}{\CC\ttau}, \\
  b(\uu,\vv) &:= \ip{u}{-\pwdiv\ttau-\bbeta\cdot\ttau+\gamma v} + \ip{\ssigma}{\CC\ttau-\pwnabla v}
  + \dual{\wat u}{\ttau\cdot\normal}_\cS + \dual{\wat\sigma}v_\cS
\end{align*}
for all $\uu = (u,\ssigma,\wat u,\wat\sigma)\in U$, $\vv= (v,\ttau)\in V$.
The ultra-weak formulation then reads: Find $\uu\in U$ such that
\begin{align}\label{eq:uwf}
  b(\uu,\vv) = F(\vv) \quad\text{for all }\vv\in V.
\end{align}

\subsection{DPG method and approximation}
In $U$ we use the canonical norm,
\begin{align*}
  \norm{\uu}U^2 := \norm{u}{}^2 + \norm{\ssigma}{}^2 + 
  \norm{\wat u}{1/2,\cS}^2 + \norm{\wat\sigma}{-1/2,\cS}^2 \quad\text{for }\uu = (u,\ssigma,\wat u,\wat\sigma)\in U.
\end{align*}
For the test space $V$ we define the three different norms
\begin{subequations}\label{eq:defVnorms}
\begin{align}
  \VnormQopt\vv^2 &:= \norm{-\pwdiv\ttau-\bbeta\cdot\ttau+\gamma v}{}^2 
  + \norm{\CC^{1/2}\ttau-\CC^{-1/2}\pwnabla v}{}^2 + \norm{\CC^{1/2}\ttau}{}^2 
  + \norm{v}{}^2, \\
  \VnormStd\vv^2 &:= \norm{\CC^{-1/2}\pwnabla v}{}^2 + \norm{v}{}^2 
  + \norm{\pwdiv\ttau}{}^2 + \norm{\CC^{1/2}\ttau}{}^2,\\
  \VnormSimple\vv^2 &:= \norm{\pwnabla v}{}^2 + \norm{v}{}^2 + \norm{\pwdiv\ttau}{}^2 + \norm{\ttau}{}^2
\end{align}
\end{subequations}
for $\vv=(v,\ttau)\in V$ and denote by $\ip\cdot\cdot_{V,\star}$ the corresponding scalar products.
Note that all norms in~\eqref{eq:defVnorms} are equivalent with equivalence constants depending on the coefficients
$\CC$, $\bbeta$, $\gamma$.
However, our main results hold for the quasi-optimal
test norm $\VnormQopt\cdot$ under mild assumptions on the coefficient $\bbeta$, 
whereas they hold for $\VnormStd\cdot$, $\VnormSimple\cdot$ only if $\bbeta=0$, i.e. for symmetric problems.

We stress that $b : U\times V\to \R$ is a bounded bilinear form and satisfies the $\inf$--$\sup$ conditions with mesh
independent constant. This can be proved with the theory developed in~\cite{breakSpace}. 
For our model problem we explicitly refer to~\cite[Example~3.7]{breakSpace} for the details. 
There it is assumed that $\div(\CC^{-1}\bbeta)=0$ and $\gamma\geq 0$. We note that their analysis can also be done with
our more general assumption~\eqref{eq:ass:coeff}.

The DPG method, seeks an approximation $\uu_h\in U_h\subset U$ of the solution $\uu\in U$ using the optimal test space
$\Theta(U_h)$, where $\Theta : U\to V$ is defined by
\begin{align}\label{def:ttt}
  \ip{\Theta\ww}\vv_V = b(\ww,\vv) \quad\text{for all } \ww\in U,\vv\in V.
\end{align}
Then, $\uu_h\in U_h$ is the solution of
\begin{align*}
  b(\uu_h,\vv_h) = F(\vv_h) \quad\text{for all } \vv_h\in \Theta(U_h).
\end{align*}
An essential feature of DPG is that $\inf$--$\sup$ stability directly transfers to the discrete problem. However, in
practice we replace $\Theta$ by a discrete version $\Theta_h : U_h \to V_h\subset V$ defined by
\begin{align}\label{def:ttt:discrete}
  \ip{\Theta_h\ww_h}{\vv_h}_V = b(\ww_h,\vv_h) \quad\text{for all }\ww_h\in U_h,\vv_h\in V_h.
\end{align}
Then, the \emph{practical DPG method} reads: Find $\uu_h\in U_h$ such that
\begin{align}\label{eq:practicalDPG}
  b(\uu_h,\Theta_h\ww_h) = F(\Theta_h\ww_h) \quad\text{for all }\ww_h\in U_h.
\end{align}

In this work we deal with the piecewise polynomial trial spaces
\begin{align*}
  U_{hp} &:= \PP^p(\TT)\times \PP^p(\TT)^d \times \PP_{c,0}^{p+1}(\cS) \times \PP^p(\cS), \\
  U_{hp}^+ &:= \PP^{p+1}(\TT)\times \PP^p(\TT)^d \times \PP_{c,0}^{p+1}(\cS) \times \PP^p(\cS)
\end{align*}
and the piecewise polynomial test spaces
\begin{align*}
  V_{hk} := \PP^{k_1}(\TT) \times \PP^{k_2}(\TT).
\end{align*}
Here, we set 
\begin{align*}
  \PP^p(T) &:= \set{v\in L^2(T)}{v \text{ is polynomial of degree } \leq p}, \\
  \PP^p(\TT) &:= \set{v\in L^2(\Omega)}{v|_T \in \PP^p(T),\,T\in\TT}, \quad \PP_{c,0}^{p+1}(\TT) := \PP^{p+1}(\TT)\cap
  H_0^1(\Omega)  \\
  \PP_{c,0}^{p+1}(\cS) &:= \gamma_{0,\cS}\left( \PP_{c,0}^{p+1}(\TT)\right), \quad
  \PP^p(\cS) := \gamma_{\normal,\cS}\left( \RT^p(\TT)\right),
\end{align*}
where $\RT^p(\TT) = \set{\ttau\in\Hdivset\Omega}{\ttau|_T(\xx) = \boldsymbol{a}+\xx b,\, \boldsymbol{a}\in\PP^p(T)^d,\, b\in
\widetilde\PP^p(T), \, T\in\TT}$ is the space of Raviart-Thomas functions (here $\widetilde\PP^p(T)$ denotes the space of
homogeneous polynomials of degree $p$).

We also use the space $C^1(\TT) := \set{v\in L^\infty(\Omega)}{v|_T \in C^1(\overline T), \, T\in\TT}$.

\subsection{Fortin operators}\label{sec:fortin}
It is well-known, see e.g.~\cite{practicalDPG}, that~\eqref{eq:practicalDPG} satisfies $\inf$--$\sup$ conditions (and
therefore admits a unique solution) if there exists a Fortin operator $\Pi_F : V\to V_h$ such that
\begin{align}\label{eq:fortin}
  \norm{\Pi_F\vv}V\leq C_F \norm{\vv}V \quad\text{and}\quad 
  b(\uu_h,\vv) = b(\uu_h,\Pi_F\vv) \quad\text{for all }\vv\in V, \uu_h\in U_h.
\end{align}
Throughout, we suppose that a Fortin operator exists for the discrete polynomial trial and test spaces under
consideration and that $C_F$ depends only on $\CC$, $\bbeta$, $\gamma$, $p\in\N_0$, and the shape-regularity of $\TT$.
Let us note that for general coefficients $\CC$, $\bbeta$, $\gamma$ the existence of such operators is
not known, except for some special cases, i.e., the Poisson model problem where $\CC$ is the identity and
$\bbeta=0=\gamma$.
Fortin operators for the latter problem on simplicial meshes have been constructed and analyzed in~\cite{practicalDPG}.
We refer also to~\cite{constrFortin} for the construction and analysis of Fortin operators for second order problems.

Supposing the existence of an Fortin operator, i.e.,~\eqref{eq:fortin}, we have:
\begin{proposition}\label{prop:dpg}
  Problems~\eqref{eq:uwf},~\eqref{eq:practicalDPG} admit 
  unique solutions $\uu=(u,\ssigma,\wat u,\wat\sigma)\in U$, $\uu_h\in U_h$ and
  \begin{align*}
    \norm{\uu-\uu_h}{U} \leq C_\mathrm{opt} \min_{\ww_h\in U_h} \norm{\uu-\ww_h}U.
  \end{align*}
  The constant $C_\mathrm{opt}>0$ depends only on $\Omega$, $\CC$, $\bbeta$, $\gamma$, $p\in\N_0$, and shape-regularity
  of $\TT$.
\qed
\end{proposition}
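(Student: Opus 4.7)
My plan is to combine the continuous inf--sup stability of $b$ on $U \times V$ (already cited from \cite{breakSpace}) with the Fortin operator~\eqref{eq:fortin} to obtain a discrete inf--sup condition on $U_h \times V_h$, and then invoke the standard mixed-formulation interpretation of the practical DPG method \cite{practicalDPG} to get both well-posedness of~\eqref{eq:practicalDPG} and the quasi-optimality bound.

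First, I would dispose of the continuous problem~\eqref{eq:uwf}. The bilinear form $b : U \times V \to \R$ is bounded, and by \cite[Example~3.7]{breakSpace} (extended from the assumption $\div(\CC^{-1}\bbeta)=0$ there to our weaker condition~\eqref{eq:ass:coeff}, as remarked in the text) it satisfies the inf--sup condition and the injectivity of its transpose on $U \times V$. Banach--Ne\v{c}as--Babu\v{s}ka then yields the existence and uniqueness of $\uu \in U$ solving~\eqref{eq:uwf}, together with $\norm{\uu}{U} \lesssim \norm{F}{V'}$.

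Next, I would derive a discrete inf--sup condition. For $\ww_h \in U_h$, the continuous inf--sup and the Fortin identity $b(\ww_h,\vv) = b(\ww_h,\Pi_F \vv)$ give
\begin{align*}
  \norm{\ww_h}{U} \lesssim \sup_{\vv\in V\setminus\{0\}} \frac{b(\ww_h,\vv)}{\norm{\vv}{V}}
  = \sup_{\vv\in V\setminus\{0\}} \frac{b(\ww_h,\Pi_F\vv)}{\norm{\vv}{V}}
  \leq C_F \sup_{\vv_h\in V_h\setminus\{0\}} \frac{b(\ww_h,\vv_h)}{\norm{\vv_h}{V}},
\end{align*}
which is the discrete inf--sup condition on $U_h \times V_h$ with constant depending only on the continuous inf--sup constant and $C_F$, hence on the quantities allowed in the statement.

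Finally, I would invoke the mixed reformulation of~\eqref{eq:practicalDPG}: define the error representation $\eeps_h \in V_h$ by $\ip{\eeps_h}{\vv_h}_V = F(\vv_h) - b(\uu_h,\vv_h)$ for all $\vv_h \in V_h$; then~\eqref{eq:practicalDPG} is equivalent to the saddle-point system seeking $(\uu_h,\eeps_h)\in U_h\times V_h$ with $\ip{\eeps_h}{\vv_h}_V + b(\uu_h,\vv_h) = F(\vv_h)$ for all $\vv_h\in V_h$ and $b(\ww_h,\eeps_h)=0$ for all $\ww_h\in U_h$. Coercivity on the kernel is automatic because the $(1,1)$-block is the $V$-inner product, and the discrete inf--sup established above is exactly Brezzi's compatibility condition; this delivers existence, uniqueness, and the a priori bounds for $(\uu_h,\eeps_h)$. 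Since the exact solution $\uu$ satisfies $b(\uu,\vv_h) = F(\vv_h)$ for all $\vv_h \in V_h$, the pair $(\uu,0)$ solves the perturbed saddle system with the same right-hand side when $\uu$ is taken as the ``trial'' component, so a standard C\'ea-type argument (as carried out in \cite[Thm.~2.1]{practicalDPG}) yields $\norm{\uu-\uu_h}{U} \leq C_\opt \min_{\ww_h\in U_h}\norm{\uu-\ww_h}{U}$, with $C_\opt$ depending only on the boundedness constant of $b$, the continuous inf--sup constant, and $C_F$, and hence only on $\Omega$, $\CC$, $\bbeta$, $\gamma$, $p$, and $\kappa_\TT$.

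The only delicate point is verifying that the inf--sup analysis of \cite{breakSpace} indeed carries through under the weaker sign condition~\eqref{eq:ass:coeff} rather than $\div(\CC^{-1}\bbeta)=0$ and $\gamma\geq 0$; this is the one place the proof departs from purely standard DPG theory, but it is straightforward because the additional term produced by integration by parts in the coercivity estimate has the sign required by~\eqref{eq:ass:coeff}. Everything else is a bookkeeping exercise on the constants.
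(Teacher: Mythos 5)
Your proposal is correct and follows exactly the route the paper intends: the paper states this proposition without proof, citing the continuous inf--sup theory of \cite{breakSpace} and the Fortin-operator argument of \cite{practicalDPG}, and your write-up is precisely the standard chain (continuous inf--sup $\to$ discrete inf--sup via $\Pi_F$ $\to$ quasi-optimality via the mixed reformulation) that those references carry out. The only point you rightly flag as nonstandard --- extending \cite[Example~3.7]{breakSpace} from $\div(\CC^{-1}\bbeta)=0$, $\gamma\ge 0$ to the weaker condition~\eqref{eq:ass:coeff} --- is handled the same way in the paper, which simply remarks that the analysis goes through.
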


\subsection{Adjoint problem and regularity assumptions}\label{sec:model}
We define the adjoint problem of~\eqref{eq:model} as
\begin{subequations}\label{eq:adjoint}
\begin{alignat}{2}
  -\div\ttau -\bbeta\cdot\ttau + \gamma v &= g &\quad&\text{in }\Omega, \\
  \CC\ttau-\nabla v &= \CC\g &\quad&\text{in }\Omega, \\
  u &=0 &\quad&\text{on }\Gamma.
\end{alignat}
\end{subequations}
Again, this problem admits a unique solution $(v,\ttau)\in H_0^1(\Omega) \times \Hdivset\Omega$ for $g\in
L^2(\Omega)$, $\g\in\LL^2(\Omega)$.

For our results we make use of the following assumptions:
We suppose that the coefficients $\CC$, $\bbeta$, $\gamma$ and the domain $\Omega$ are such that for $f,g\in
L^2(\Omega)$, $\ff,\g\in \HH^1(\TT)\cap\Hdivset\Omega$
the unique solutions $(u,\ssigma)\in H_0^1(\Omega)\times \Hdivset\Omega$ resp. $(v,\ttau)\in H_0^1(\Omega)\times \Hdivset\Omega$
of~\eqref{eq:model} resp.~\eqref{eq:adjoint} satisfy
\begin{subequations}\label{eq:assReg}
\begin{align}
  \norm{u}{H^2(\Omega)} + \norm{\ssigma}{\HH^1(\TT)} &\leq C (\norm{f}{} + \norm{\ff}{\HH^1(\TT)}), \\
  \norm{v}{H^2(\Omega)} + \norm{\ttau}{\HH^1(\TT)} & \leq C (\norm{g}{} + \norm{\g}{\HH^1(\TT)}).\label{eq:assReg:b}
\end{align}
\end{subequations}
Here, $\norm{\cdot}{H^s(\Omega)}$ is the usual notation for norms in the Sobolev space $H^s(\Omega)$ ($s>0$), and
$\norm{\cdot}{}$ is the $L^2(\Omega)$ norm and $\norm\cdot{\HH^s(\TT)}$ the broken Sobolev norm for vector valued
functions.

\begin{remark}
  The regularity estimates~\eqref{eq:assReg} are satisfied if $d=2$, $\CC$ is the identity matrix, $\bbeta\in
  C^1(\TT)^d\cap\Hdivset\Omega$ and $\Omega$ is convex. This can be seen as follows: The first component $u\in
  H_0^1(\Omega)$ of the solution of~\eqref{eq:model} satisfies
  \begin{align*}
    -\Delta u = f -\div\ff -(\div\bbeta)u + \bbeta\cdot\nabla u - \gamma u \in L^2(\Omega).
  \end{align*}
  Then $u\in H^2(\Omega)$ and $\norm{u}{H^2(\Omega)}$ is bounded by the $L^2(\Omega)$ norm of the right-hand side,
  since $\Omega$ is a convex polyhedral domain, see~\cite{grisvard}.
  Finally, the second equation of the model problem~\eqref{eq:model} shows 
  \begin{align*}
    \norm{\ssigma}{\HH^1(\TT)} = \norm{\ff - \nabla u + \bbeta u}{\HH^1(\TT)} 
    \lesssim \norm{\ff}{\HH^1(\TT)} + \norm{u}{H^2(\Omega)} \lesssim \norm{f}{} + \norm{\ff}{\HH^1(\TT)}.
  \end{align*}
  Similarly, one shows~\eqref{eq:assReg:b} (even a less regular coefficient $\bbeta$ suffices for the adjoint problem).
\end{remark}

\subsection{Assumptions on coefficients and test norms}\label{sec:ass}
Besides the assumptions on the coefficients and the domain to ensure unique solvability of the
problems~\eqref{eq:model},~\eqref{eq:adjoint} and the estimates~\eqref{eq:assReg} we also need some
additional assumptions on the coefficients that are listed in the following table:
\begin{table}[H]
\centering
\begin{tabular}{|N|c|c|c|c|}
\hline
\multicolumn{1}{|c|}{Case} & Test norm $\norm{\cdot}V$ & $\CC$ & $\bbeta$ & $\gamma$  \\ \hline\hline
\label{case:a} & $\VnormQopt\cdot$ & $C^1(\TT)^{d\times d}$ & $C^1(\TT)^d$ & $C^1(\TT)$ \\ \hline
\label{case:b} & $\VnormStd\cdot$ & $C^1(\TT)^{d\times d}$ & $0$ & $C^1(\TT)$\\ \hline
\label{case:c} & $\VnormSimple\cdot$ & $C^{0,1}(\overline\Omega)^{d\times d} \cap C^1(\TT)^{d\times d}$ & $0$ & $C^1(\TT)$ \\ \hline
\end{tabular}
\caption{Additional assumptions (besides~\eqref{eq:ass:coeff} and~\eqref{eq:assReg})
on the coefficients for the three test norms under consideration.} \label{tab:ass}
\end{table}
We emphasize that $\bbeta=0$ in the Cases~\ref{case:b},\ref{case:c} is also necessary in general. 
In particular, in Section~\ref{sec:ex} we provide a simple example where $\bbeta\neq 0$ and the choice $\norm{\vv}V =
\VnormStd\vv$ or $\norm\vv{V} = \VnormSimple\vv$
does not lead to higher convergence rates, whereas $\norm\vv{V}=\VnormQopt\vv$ does.

\subsection{$L^2(\Omega)$ projection}
Our first main result shows that the DPG method with ultra-weak formulation delivers up to a higher order term the
$L^2(\Omega)$ best approximation for the scalar field variable.
To that end let $\Pi^p : L^2(\Omega)\to \PP^p(\TT)$ denote the $L^2(\Omega)$ projector.
\begin{theorem}\label{thm:L2}
  Consider one of the Cases~\ref{case:a},~\ref{case:b}, or~\ref{case:c}.
  Let $\uu=(u,\ssigma,\wat u,\wat\sigma)\in U$ be the solution of~\eqref{eq:uwf} for some given $f\in L^2(\Omega)$,
  $\ff\in\LL^2(\Omega)$ and suppose $u\in H^{p+2}(\Omega)$, $\ssigma\in \HH^{p+1}(\TT)$.
  Let $\uu_h=(u_h,\ssigma_h,\wat u_h,\wat\sigma_h)\in U_h:=U_{hp}$ be the solution of the practical DPG
  method~\eqref{eq:practicalDPG}.
  Suppose $\PP_{c,0}^1(\TT)\times \RT^{p}(\TT) \subseteq V_{hk}$.
  It holds that
  \begin{align*}
    \norm{u-\Pi^p u}{} \leq \norm{u-u_h}{} \leq \norm{u-\Pi^p u}{} + C h^{p+2} (\norm{u}{H^{p+2}(\Omega)} +
    \norm{\ssigma}{\HH^{p+1}(\TT)}).
  \end{align*}
  The constant $C>0$ depends only on $\Omega$, $\CC$, $\bbeta$, $\gamma$, $p\in\N_0$, and shape-regularity of $\TT$.
\end{theorem}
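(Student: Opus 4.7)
The lower bound $\norm{u-\Pi^p u}{} \leq \norm{u-u_h}{}$ is immediate, since $u_h \in \PP^p(\TT)$ and $\Pi^p u$ is the $L^2(\Omega)$ best approximation of $u$ in that space. The real work is in the upper bound, which I would obtain by a duality (Aubin--Nitsche--type) argument applied to the error $g := \Pi^p u - u_h = \Pi^p(u-u_h) \in \PP^p(\TT)$, with the goal of showing $\norm{g}{} = \OO(h^{p+2})$ and then concluding by the triangle inequality $\norm{u-u_h}{} \leq \norm{u-\Pi^p u}{} + \norm{g}{}$.

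To set up the duality, I plug $g$ (and $\g = 0$) into the adjoint problem~\eqref{eq:adjoint}; by the regularity assumption~\eqref{eq:assReg:b} this yields a unique $\vv = (v,\ttau) \in H_0^1(\Omega) \times \Hdivset{\Omega} \subset V$ with $\norm{v}{H^2(\Omega)} + \norm{\ttau}{\HH^1(\TT)} \lesssim \norm{g}{}$. Testing the bilinear form against $\vv$ and using~\eqref{eq:jumps} (the skeleton dualities vanish because $v \in H_0^1(\Omega)$ and $\ttau \in \Hdivset{\Omega}$) together with the adjoint equations gives
\[
  b(\uu - \uu_h, \vv) = \ip{u-u_h}{g} + \ip{\ssigma-\ssigma_h}{\CC\g} = \ip{u-u_h}{g} = \norm{g}{}^2,
\]
where the last equality uses $g \in \PP^p(\TT)$ and self-adjointness of $\Pi^p$.

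The next step is to pass through the trial-to-test operator: write $\vv = \Theta \ww$ with $\ww = \Theta^{-1}\vv \in U$ and invoke Lemma~\ref{lem:w:rep} (applied separately in each of the Cases~\ref{case:a}--\ref{case:c}) to produce the decomposition
\[
  \ww = (g, 0, 0, 0) + \widetilde\ww + \ww^\star,
\]
where $\widetilde\ww$ is built from components of the dual solution $\vv$ and $\ww^\star$ solves a primal problem of the form~\eqref{eq:model} whose data are controlled by $\vv$. Under~\eqref{eq:assReg} and the coefficient smoothness listed in Table~\ref{tab:ass}, every component of $\widetilde\ww + \ww^\star$ then carries enough Sobolev regularity to admit $\OO(h)$ approximation in $\norm{\cdot}{U}$ from $U_h = U_{hp}$. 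Crucially, since $g \in \PP^p(\TT)$ appears literally as the first component of $\ww$, I can choose $\ww_h = (g,0,0,0) + \overline\ww_h \in U_h$ with $\overline\ww_h$ a componentwise quasi-best approximation of $\widetilde\ww + \ww^\star$; Lemma~\ref{lem:est} then supplies the decisive bound $\norm{\ww-\ww_h}{U} \lesssim h\,\norm{g}{}$.

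To finish, I combine these ingredients with the a priori estimate from Theorem~\ref{thm:approx}. Using Galerkin orthogonality $b(\uu-\uu_h, \Theta_h \ww_h) = 0$ for~\eqref{eq:practicalDPG}, continuity of $b$, and the Fortin estimate~\eqref{eq:fortin} to absorb the discrepancy between the exact test function $\vv = \Theta\ww$ and a discrete one, I expect
\[
  \norm{g}{}^2 = b(\uu-\uu_h, \vv) \lesssim \norm{\uu-\uu_h}{U}\,\norm{\ww-\ww_h}{U} \lesssim \norm{\uu-\uu_h}{U}\,h\,\norm{g}{},
\]
so that $\norm{g}{} \lesssim h\,\norm{\uu-\uu_h}{U} \lesssim h \cdot h^{p+1}\bigl(\norm{u}{H^{p+2}(\Omega)} + \norm{\ssigma}{\HH^{p+1}(\TT)}\bigr)$ by Theorem~\ref{thm:approx}, as claimed. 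The main obstacle will be the representation step via Lemma~\ref{lem:w:rep}: because $\Theta^{-1}$ does not in general send regular functions to regular functions and because its explicit form depends on the test norm chosen in~\eqref{eq:defVnorms}, the decomposition $\ww = (g,0,0,0) + \widetilde\ww + \ww^\star$ must be derived separately for each test norm, and this is precisely where the constraint $\bbeta = 0$ for $\VnormStd{\cdot}$ and $\VnormSimple{\cdot}$ originates: the convection-type terms generated by $\Theta^{-1}$ under those norms cannot otherwise be absorbed into a primal auxiliary problem whose solution inherits the regularity~\eqref{eq:assReg}.
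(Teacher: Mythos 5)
Your proposal is correct and follows essentially the same route as the paper: the lower bound and triangle inequality reduce everything to bounding $\norm{g}{}=\norm{\Pi^p(u-u_h)}{}$, the identity $\norm{g}{}^2=\ip{u-u_h}{g}$ feeds into the duality estimate of Lemma~\ref{lem:est} (whose internal structure --- adjoint problem, Lemma~\ref{lem:w:rep}, the choice $\ww_h=(g,0,0,0)+\overline\ww_h$ --- you reproduce accurately), and Theorem~\ref{thm:approx} with Proposition~\ref{prop:dpg} supplies $\norm{\uu-\uu_h}U=\OO(h^{p+1})$. The only cosmetic difference is that the paper absorbs the practical-DPG discrepancy through the mixed formulation~\eqref{eq:mixedDPG} and the error function $\eeps_{hk}$ (which also requires the bound $\norm{\vv-\vv_h}V\lesssim h\norm{g}{}$) rather than directly through the Fortin operator, but this is all contained in Lemma~\ref{lem:est}, which you invoke.
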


\subsection{Higher convergence rate by increasing polynomial degree}
Our second main result shows that higher convergence rates for the scalar field variable are obtained by increasing the
polynomial degree in the approximation space.
\begin{theorem}\label{thm:augment}
  Consider one of the Cases~\ref{case:a},~\ref{case:b}, or~\ref{case:c}.
  Let $\uu=(u,\ssigma,\wat u,\wat\sigma)\in U$ be the solution of~\eqref{eq:uwf} for some given $f\in L^2(\Omega)$,
  $\ff\in\LL^2(\Omega)$ and suppose $u\in H^{p+2}(\Omega)$, $\ssigma\in \HH^{p+1}(\TT)$.
  Let $\uu_h^+=(u_h^+,\ssigma_h,\wat u_h,\wat\sigma_h)\in U_h:=U_{hp}^+$ 
  be the solution of the practical DPG method~\eqref{eq:practicalDPG}.
  Suppose $\PP_{c,0}^1(\TT)\times \RT^{p+1}(\TT) \subseteq V_{hk}$.
  It holds that
  \begin{align*}
    \norm{u-u_h^+}{} \leq C h^{p+2} (\norm{u}{H^{p+2}(\Omega)} + \norm{\ssigma}{\HH^{p+1}(\TT)}).
  \end{align*}
  The constant $C>0$ depends only on $\Omega$, $\CC$, $\bbeta$, $\gamma$, $p\in\N_0$, and shape-regularity of $\TT$.
\end{theorem}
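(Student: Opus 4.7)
The plan is a duality argument in the spirit outlined in the introduction. Set $e := u - u_h^+$ and choose $g := \Pi^{p+1}u - u_h^+ \in \PP^{p+1}(\TT)$, so that $g$ lies in the enriched scalar component of $U_{hp}^+$. Orthogonality of $\Pi^{p+1}$ gives
\[
\norm{e}{}^2 = \norm{u-\Pi^{p+1}u}{}^2 + \ip{e}{g},
\]
and the first summand is of order $h^{p+2}\norm{u}{H^{p+2}(\Omega)}$ by standard polynomial approximation. It therefore suffices to extract an $O(h^{p+2})$ estimate from $\ip{e}{g}$.

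For this, let $\vv = (v,\ttau) \in H_0^1(\Omega)\times \Hdivset\Omega$ solve the adjoint problem~\eqref{eq:adjoint} with data $g$ and $\g = 0$. Since $\vv$ is conforming, the skeleton dualities in $b(\,\cdot\,,\vv)$ vanish by~\eqref{eq:jumps}, and a direct computation using the adjoint equations identifies $b(\uu-\uu_h^+,\vv) = \ip{e}{g}$. Setting $\ww := \Theta^{-1}\vv \in U$, Lemma~\ref{lem:w:rep} furnishes the decomposition
\[
\ww = (g,0,0,0) + \widetilde\ww + \ww^\star,
\]
in which $\widetilde\ww$ is built from the regular adjoint solution $\vv$ and $\ww^\star$ solves a primal problem of type~\eqref{eq:model} with data controlled by $\vv$. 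Both summands inherit enough regularity from~\eqref{eq:assReg} for Lemma~\ref{lem:est} to produce $\overline\ww_h \in U_{hp}^+$ with $\norm{\widetilde\ww+\ww^\star-\overline\ww_h}{U}\lesssim h\norm{g}{}$. Because $g \in \PP^{p+1}(\TT)$ matches exactly the augmented first component, the choice $\ww_h := (g,0,0,0) + \overline\ww_h$ is admissible in $U_{hp}^+$ and $\norm{\ww-\ww_h}{U}\lesssim h\norm{g}{}$.

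Galerkin orthogonality for the practical DPG method combined with the Fortin identity~\eqref{eq:fortin}---which allows passage between $\Theta_h$ and $\Theta$ at the price of a constant depending only on $C_F$---then yields
\[
\ip{e}{g} = b(\uu-\uu_h^+,\Theta(\ww-\ww_h)) \lesssim \norm{\uu-\uu_h^+}{U}\,\norm{\ww-\ww_h}{U} \lesssim h\,\norm{\uu-\uu_h^+}{U}\,\norm{g}{}.
\]
Proposition~\ref{prop:dpg} applied to $U_h = U_{hp}^+$, together with standard polynomial approximation (rate $h^{p+2}$ in the enriched scalar component and $h^{p+1}$ in the remaining ones), gives
\[
\norm{\uu-\uu_h^+}{U}\lesssim h^{p+1}\bigl(\norm{u}{H^{p+2}(\Omega)}+\norm{\ssigma}{\HH^{p+1}(\TT)}\bigr).
\]
Combining this with the triangle-inequality bound $\norm{g}{}\leq \norm{e}{}+Ch^{p+2}\norm{u}{H^{p+2}(\Omega)}$ and absorbing the resulting $\norm{e}{}$-term on the left via Young's inequality delivers the asserted estimate.

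The main technical obstacle is Lemma~\ref{lem:w:rep}: the explicit form of the decomposition of $\ww = \Theta^{-1}\vv$ depends sensitively on the inner product defining $\Theta$ and must be analysed separately for each of the three test norms in Table~\ref{tab:ass}. In the convective case $\bbeta\neq 0$ only the quasi-optimal norm $\VnormQopt\cdot$ yields a primal companion problem for $\ww^\star$ of the form~\eqref{eq:model} that is well-posed and inherits the regularity~\eqref{eq:assReg}, which is precisely why Cases~\ref{case:b} and~\ref{case:c} must require $\bbeta = 0$.
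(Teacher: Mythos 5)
Your proposal is correct and follows essentially the same route as the paper: the choice $g=\Pi^{p+1}u-u_h^+\in\PP^{p+1}(\TT)$, the duality estimate of Lemma~\ref{lem:est} (whose hypotheses you verify, including $(g,0,0,0)\in U_{hp}^+$ and $\RT^{p+1}(\TT)\subseteq V_{hk}$), and the a priori bound from Proposition~\ref{prop:dpg} with Theorem~\ref{thm:approx}. The only cosmetic differences are that the paper observes directly $\norm{g}{}^2=\ip{u-u_h^+}{g}$ and divides by $\norm{g}{}$ rather than using Pythagoras plus Young's inequality, and that your intermediate display $\ip{e}{g}=b(\uu-\uu_h^+,\Theta(\ww-\ww_h))$ is the ideal-DPG identity, whereas the practical method requires the mixed-formulation argument with the error function $\eeps_{hk}$ --- but that is exactly what Lemma~\ref{lem:est} encapsulates, so no gap results.
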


\subsection{Higher convergence rate by postprocessing}\label{sec:main:postproc}
Our third and final main result shows that higher convergence rates for the scalar field variable are obtained by
postprocessing the solution:
Let $\uu_h=(u_h,\ssigma_h,\wat u_h,\wat\sigma_h)\in U_h:=U_{hp}$ be the solution of~\eqref{eq:practicalDPG}.
We define $\widetilde u_h \in \PP^{p+1}(\TT)$ on each element $T\in\TT$ as the solution of the local Neumann problem
\begin{subequations}\label{eq:postproc}
\begin{align}
  \ip{\nabla \widetilde u_h}{\nabla v_h}_T &= \ip{\CC\ff-\CC\ssigma_h+\bbeta u_h}{\nabla v_h}_T \quad\text{for all }
  v_h\in\PP^{p+1}(T), \\
  \ip{\widetilde u_h}1_T &= \ip{u_h}1_T.\label{eq:postproc:b}
\end{align}
\end{subequations}
Let us note that this type of postprocessing is common in literature and can already be found in the early 
work~\cite{stenbergPostPr}.

\begin{theorem}\label{thm:postproc}
  Consider one of the Cases~\ref{case:a},~\ref{case:b}, or~\ref{case:c}.
  Let $\uu=(u,\ssigma,\wat u,\wat\sigma)\in U$ be the solution of~\eqref{eq:uwf} for some given $f\in L^2(\Omega)$,
  $\ff\in\LL^2(\Omega)$ and suppose $u\in H^{p+2}(\Omega)$, $\ssigma\in \HH^{p+1}(\TT)$.
  Let $\uu_h=(u_h,\ssigma_h,\wat u_h,\wat\sigma_h)\in U_h:=U_{hp}$ 
  be the solution of the practical DPG method~\eqref{eq:practicalDPG} and define $\widetilde u_h \in\PP^{p+1}(\TT)$
  by~\eqref{eq:postproc}.
  Suppose $\PP_{c,0}^1(\TT)\times \RT^{p}(\TT) \subseteq V_{hk}$.
  It holds that
  \begin{align*}
    \norm{u-\widetilde u_h}{} \leq C h^{p+2} (\norm{u}{H^{p+2}(\Omega)} + \norm{\ssigma}{\HH^{p+1}(\TT)}).
  \end{align*}
  The constant $C>0$ depends only on $\Omega$, $\CC$, $\bbeta$, $\gamma$, $p\in\N_0$, and shape-regularity of $\TT$.
\end{theorem}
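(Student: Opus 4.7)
The plan is to compare $\widetilde u_h$ element by element to a local Neumann elliptic projection of the exact solution. For each $T\in\TT$, define $P_T u\in\PP^{p+1}(T)$ by $\ip{\nabla P_T u}{\nabla v_h}_T = \ip{\nabla u}{\nabla v_h}_T$ for all $v_h\in\PP^{p+1}(T)$ together with $\ip{P_T u}{1}_T = \ip{u}{1}_T$. A standard Bramble--Hilbert argument gives $\norm{u - P_T u}{T} \lesssim h_T^{p+2}|u|_{H^{p+2}(T)}$, which already sums to the desired rate, so the task reduces to proving $\sum_{T}\norm{P_T u-\widetilde u_h}{T}^2 = \OO(h^{2p+4})$.

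Using~\eqref{eq:model:a} in the form $\nabla u = \CC\ff-\CC\ssigma+\bbeta u$, the defining equation of $P_T u$ becomes $\ip{\nabla P_T u}{\nabla v_h}_T = \ip{\CC\ff - \CC\ssigma + \bbeta u}{\nabla v_h}_T$. Subtracting~\eqref{eq:postproc}, the error $e_T := P_T u-\widetilde u_h\in\PP^{p+1}(T)$ satisfies
\begin{align*}
  \ip{\nabla e_T}{\nabla v_h}_T &= \ip{\CC(\ssigma_h-\ssigma)}{\nabla v_h}_T + \ip{\bbeta(u-u_h)}{\nabla v_h}_T, \\
  \ip{e_T}{1}_T &= \ip{u-u_h}{1}_T = \ip{\Pi^p u-u_h}{1}_T,
\end{align*}
the second line because $\Pi^p u$ reproduces the elementwise mean. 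Split $e_T=e_T^0+c_T$ into its zero-mean part and its constant mean.

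For the zero-mean part I would test with $v_h=e_T^0$ and apply Cauchy--Schwarz to get $\norm{\nabla e_T^0}{T}\lesssim\norm{\ssigma-\ssigma_h}{T}+\norm{u-u_h}{T}$, followed by a Poincaré inequality for zero-mean polynomials on shape-regular simplices to obtain $\norm{e_T^0}{T}\lesssim h_T\norm{\nabla e_T^0}{T}$. Summing over $T$ and invoking the a priori rate $\norm{\uu-\uu_h}{U}=\OO(h^{p+1})$ from Theorem~\ref{thm:approx} yields $\sum_T\norm{e_T^0}{T}^2\lesssim h^2\cdot h^{2p+2}=h^{2p+4}$. For the constant part, Cauchy--Schwarz gives $\norm{c_T}{T}^2 = |\ip{\Pi^p u-u_h}{1}_T|^2/|T|\leq\norm{\Pi^p u-u_h}{T}^2$, whence $\sum_T\norm{c_T}{T}^2\leq\norm{\Pi^p u-u_h}{}^2$. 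A triangle inequality then concludes the argument once the right-hand side is shown to be $\OO(h^{2p+4})$.

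The main obstacle is precisely this last bound: the sharp duality estimate $\norm{\Pi^p u-u_h}{}=\OO(h^{p+2})$. It cannot be read off from the statement of Theorem~\ref{thm:L2} alone, since combining that inequality with the $L^2$-orthogonal decomposition $\norm{u-u_h}{}^2=\norm{u-\Pi^p u}{}^2+\norm{\Pi^p u-u_h}{}^2$ only yields $\OO(h^{p+3/2})$. Instead, one must borrow the intermediate bound produced inside the proof of Theorem~\ref{thm:L2}: applied to $g:=\Pi^p(u-u_h)=\Pi^p u-u_h$, the duality framework (Lemma~\ref{lem:w:rep}, Lemma~\ref{lem:est}) delivers $\norm{g}{}^2=\ip{u-u_h}{g}\lesssim h\,\norm{\uu-\uu_h}{U}\,\norm{g}{}$ and hence $\norm{g}{}\lesssim h\cdot h^{p+1}=h^{p+2}$. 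With this ingredient the estimates above combine to $\norm{u-\widetilde u_h}{}\lesssim h^{p+2}(\norm{u}{H^{p+2}(\Omega)}+\norm{\ssigma}{\HH^{p+1}(\TT)})$, as claimed.
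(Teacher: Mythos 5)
Your proof is correct and follows essentially the same strategy as the paper's: an elementwise Neumann elliptic projection of $u$ controls the gradient part of the postprocessing error through $h\,\norm{\uu-\uu_h}{U}=\OO(h^{p+2})$, while the elementwise mean is handled by the duality machinery of Lemma~\ref{lem:est}. The only cosmetic differences are that you normalize the local projection to match the mean of $u$ and then invoke Lemma~\ref{lem:est} with $g=\Pi^p(u-u_h)$, whereas the paper uses a zero-mean auxiliary projection together with the splitting $1=\Pi^0+(1-\Pi^0)$ and $g=\Pi^0(u-u_h)$; your remark that the stated inequality of Theorem~\ref{thm:L2} alone would only give $\norm{\Pi^p u-u_h}{}=\OO(h^{p+3/2})$, so that the intermediate duality bound must be reused, is accurate and well taken.
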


\section{Proofs}\label{sec:proof}
In this section we proof the results stated in Theorems~\ref{thm:L2},~\ref{thm:augment}, and~\ref{thm:postproc}.
First, in Section~\ref{sec:approx} we collect some standard results on projection operators and consider approximation
results with respect to $\norm\cdot{U}$.
Second, Section~\ref{sec:mixed} recalls the equivalent mixed formulation of the practical DPG method.
Then, Section~\ref{sec:aux} provides auxiliary results that allow to prove the main results in a uniform fashion.
Finally, in Sections~\ref{proof:L2},~\ref{proof:augment},~\ref{proof:postproc} we give the proofs of our main results.

\subsection{Projection operators and approximation results}\label{sec:approx}
Throughout let $p\in\N_0$.
Let $\Pi^p: L^2(\Omega) \to \PP^p(\TT)$ denote the $L^2(\Omega)$ projector. For $\ttau\in \LL^2(\Omega)$ the term $\Pi^p
\ttau$ is understood as the application of $\Pi^p$ to each component.
We have the (local) approximation properties
\begin{subequations}\label{eq:approx}
\begin{align}\label{eq:approx:L2}
  \norm{u-\Pi^p u}{} \leq C_p h^{p+1}\snorm{u}{H^{p+1}(\TT)} \quad\text{and}\quad
  \norm{\ssigma-\Pi^p\ssigma}{} \leq 
  C_ph^{p+1}\snorm{\ssigma}{\HH^{p+1}(\TT)},
\end{align}
where $\snorm{\cdot}{H^{n}(\TT)} := \norm{D_\TT^n\cdot}{}$ with $D_\TT^n$ denoting the $\TT$-elementwise $n$-th derivative
operator.
Let $\Pi_\nabla^{p+1} : H_0^1(\Omega)\to \PP_{c,0}^{p+1}(\TT)$ denote the Scott-Zhang projection operator or any other
operator with the property
\begin{align}\label{eq:approx:SZ}
  \norm{u-\Pi_\nabla^{p+1}u}{H^1(\Omega)} \leq C_p h^{p+1}\norm{u}{H^{p+2}(\Omega)}.
\end{align}
Moreover, let $\Pi_\div^p : \Hdivset\Omega\cap\HH^1(\TT) \to \RT^p(\TT)$ denote the Raviart-Thomas operator, which
satisfies
\begin{align}\label{eq:approx:RT}
  \norm{\ssigma-\Pi_\div^p\ssigma}{} \leq C_p h^{k+1}\snorm{\ssigma}{\HH^{k+1}(\TT)} \quad\text{for }k\in [0,p],
\end{align}
and the commutativity property $\div\Pi_\div^p\ssigma = \Pi^p\div\ssigma$.
Note that $\Pi_\div^p$ is well-defined for functions $\ssigma\in \Hdivset\Omega\cap\HH^1(\TT)$: First,
normal traces of $\ssigma\in\HH^1(\TT)$ are well-defined on each facet of $\partial T$, $T\in\TT$, in the sense of
$L^2(\partial T)$, i.e.,
$\ssigma\cdot\normal_T \in L^2(\partial T)$ and, second, $\ssigma\in\Hdivset\Omega$ implies unisolvency of normal
traces.
The constant $C_p>0$ in~\eqref{eq:approx} depends only on $p\in\N_0$ and shape-regularity of $\TT$.
\end{subequations}

The following result is an adaptation of~\cite[Theorem~5 and Corollary~6]{SupconvDPG}.
\begin{theorem}\label{thm:approx}
  Let $p\in\N_0$ and let $w\in H^{p+2}(\Omega)$, $\cchi\in \HH^{p+1}(\TT)\cap\Hdivset\Omega$. Define $\ww :=
  (w,\cchi,\gamma_{0,\cS}w,\gamma_{\normal,\cS}\cchi) \in U$.
  If $U_h\in\{U_{hp},U_{hp}^+\}$, then
  \begin{align*}
    \min_{\ww_h\in U_h}\norm{\ww-\ww_h}U \leq C h^{p+1}(\norm{w}{H^{p+2}(\Omega)} + \norm{\cchi}{\HH^{p+1}(\TT)}).
  \end{align*}
  The constant $C>0$ depends only on $p$ and shape-regularity of $\TT$.
\end{theorem}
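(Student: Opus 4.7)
The plan is to construct an explicit quasi-interpolant $\ww_h = (w_h, \cchi_h, \wat w_h, \wat\chi_h) \in U_h$ componentwise, using the standard projection operators collected in Section~\ref{sec:approx}, and then bound each of the four terms in $\|\ww - \ww_h\|_U$ separately.

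For the bulk (scalar and vector) components I would take $w_h := \Pi^p w$ in the case $U_h = U_{hp}$ (or $w_h := \Pi^{p+1} w$ when $U_h = U_{hp}^+$) and $\cchi_h := \Pi^p \cchi$. The $L^2$ bounds $\|w - w_h\| \lesssim h^{p+1}\|w\|_{H^{p+1}(\TT)}$ and $\|\cchi - \cchi_h\| \lesssim h^{p+1}|\cchi|_{\HH^{p+1}(\TT)}$ follow directly from \eqref{eq:approx:L2}, and in the $U_{hp}^+$ case we even gain one power of $h$ for the scalar component; a posteriori the weaker bound of the theorem is still correct.

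For the trace components I would exploit the minimum-energy definitions of $\|\cdot\|_{1/2,\cS}$ and $\|\cdot\|_{-1/2,\cS}$. Set $\wat w_h := \gamma_{0,\cS}(\Pi_\nabla^{p+1} w) \in \PP_{c,0}^{p+1}(\cS)$ and $\wat\chi_h := \gamma_{\normal,\cS}(\Pi_\div^p \cchi) \in \PP^p(\cS)$. Since $\Pi_\nabla^{p+1} w \in H_0^1(\Omega)$ is an admissible extension of $\gamma_{0,\cS} w - \wat w_h$, the definition of the trace norm gives
\begin{align*}
  \|\gamma_{0,\cS} w - \wat w_h\|_{1/2,\cS} \leq \|w - \Pi_\nabla^{p+1} w\|_{H^1(\Omega)} \lesssim h^{p+1}\|w\|_{H^{p+2}(\Omega)}
\end{align*}
by \eqref{eq:approx:SZ}. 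Likewise $\cchi - \Pi_\div^p\cchi \in \Hdivset\Omega$ is an admissible extension of the normal-trace difference, so
\begin{align*}
  \|\gamma_{\normal,\cS}\cchi - \wat\chi_h\|_{-1/2,\cS}^2 \leq \|\cchi - \Pi_\div^p\cchi\|^2 + \|\div\cchi - \Pi^p\div\cchi\|^2,
\end{align*}
where I used the commutativity $\div\Pi_\div^p = \Pi^p\div$. The first term is $\OO(h^{p+1}|\cchi|_{\HH^{p+1}(\TT)})$ by \eqref{eq:approx:RT}, and for the second I observe that $\cchi \in \HH^{p+1}(\TT)$ implies $\div\cchi \in H^p(\TT)$ piecewise with $|\div\cchi|_{H^p(\TT)} \lesssim |\cchi|_{\HH^{p+1}(\TT)}$, so \eqref{eq:approx:L2} again produces an $h^{p+1}$ bound.

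The only real subtlety is ensuring that the chosen $\wat w_h$, $\wat\chi_h$ actually lie in the prescribed discrete trace spaces $\PP_{c,0}^{p+1}(\cS)$ and $\PP^p(\cS)$ — this is automatic from the definitions of these spaces as traces of $\PP_{c,0}^{p+1}(\TT)$ and $\RT^p(\TT)$, together with the fact that $\Pi_\nabla^{p+1}$ maps into $\PP_{c,0}^{p+1}(\TT)$ and $\Pi_\div^p$ into $\RT^p(\TT)$. Squaring and summing the four estimates then yields the claim, with a constant depending only on $p$ and the shape-regularity parameter $\kappa_\TT$. I do not anticipate any serious obstacle; the result is essentially a bookkeeping assembly of known approximation properties combined with the extension characterisation of the trace norms.
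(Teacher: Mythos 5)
Your construction of the quasi-interpolant is exactly the one the paper uses — $\ww_h = (\Pi^p w,\Pi^p\cchi,\gamma_{0,\cS}\Pi_\nabla^{p+1}w,\gamma_{\normal,\cS}\Pi_\div^p\cchi)$ — and your treatment of the bulk components and of the $H^{1/2}(\cS)$ trace (via the minimum-energy characterisation and \eqref{eq:approx:SZ}) is correct and identical to the paper's. The gap is in the normal-trace term. Your bound
\begin{align*}
  \norm{\gamma_{\normal,\cS}\cchi - \wat\chi_h}{-1/2,\cS}^2 \leq \norm{\cchi - \Pi_\div^p\cchi}{}^2 + \norm{(1-\Pi^p)\div\cchi}{}^2
\end{align*}
is a valid inequality, but the second term does \emph{not} produce an $h^{p+1}$ rate under the stated hypothesis. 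The theorem only assumes $\cchi\in\HH^{p+1}(\TT)$, so $\div\cchi$ has merely $p$ piecewise weak derivatives, and \eqref{eq:approx:L2} applied to $\div\cchi$ with the projector $\Pi^p$ gives at best $\norm{(1-\Pi^p)\div\cchi}{}\lesssim h^{p}\snorm{\div\cchi}{H^{p}(\TT)}$ — one power of $h$ short. The failure is stark for $p=0$: there $\div\cchi\in L^2(\Omega)$ only, and $\norm{(1-\Pi^0)\div\cchi}{}$ need not converge at all. Your sentence claiming that \eqref{eq:approx:L2} ``again produces an $h^{p+1}$ bound'' silently upgrades $\div\cchi$ to $H^{p+1}(\TT)$, which is not part of the hypotheses.

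This is precisely why the paper does not estimate the normal trace by the crude $\Hdivset\Omega$ extension norm, but instead invokes the localized argument of \cite[Proof of Theorem~5]{SupconvDPG}. The mechanism there is a piecewise duality estimate: for $v\in H^1(T)$ one writes
\begin{align*}
  \dual{\gamma_{\normal,\cS}(\cchi-\Pi_\div^p\cchi)}{v}_{\partial T}
  = \ip{\cchi-\Pi_\div^p\cchi}{\nabla v}_T + \ip{(1-\Pi^p)\div\cchi}{v}_T ,
\end{align*}
and in the second term the $L^2(T)$-orthogonality of $(1-\Pi^p)\div\cchi$ to $\PP^p(T)$ allows replacing $v$ by $(1-\Pi^0)v$, which contributes an extra factor $h_T\norm{\nabla v}{T}$ by the local Poincar\'e inequality. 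Combined with $\norm{(1-\Pi^p)\div\cchi}{T}\leq\norm{(1-\Pi^{p-1})\div\cchi}{T}\lesssim h_T^{p}\snorm{\div\cchi}{H^{p}(T)}$, this recovers the full rate $h^{p+1}\norm{\cchi}{\HH^{p+1}(\TT)}$ without ever requiring $\div\cchi\in H^{p+1}(\TT)$. Your proof needs this (or an equivalent) refinement to close; as written, the normal-trace estimate loses one order and the theorem does not follow.
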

\begin{proof}
  Define
  \begin{align*}
    \ww_h := (\Pi^p w,\Pi^p\cchi,\gamma_{0,\cS}\Pi_\nabla^{p+1}w,\gamma_{\normal,\cS}\Pi_\div^p\cchi)\in U_h.
  \end{align*}
  We estimate the terms in
  \begin{align*}
    \norm{\ww-\ww_h}U^2 = \norm{w-\Pi^pw}{}^2 + \norm{\cchi-\Pi^p\cchi}{}^2 +
    \norm{\gamma_{0,\cS}(w-\Pi_\nabla^{p+1}w)}{1/2,\cS}^2 + 
    \norm{\gamma_{\normal,\cS}(\cchi-\Pi_\div^p\cchi)}{-1/2,\cS}^2.
  \end{align*}
  First, following the lines of~\cite[Proof of Theorem~5]{SupconvDPG} shows
  \begin{align*}
    \norm{\gamma_{\normal,\cS}(\cchi-\Pi_\div^p\cchi)}{-1/2,\cS} \lesssim h^{p+1}\norm{\cchi}{\HH^{p+1}(\TT)}.
  \end{align*}
  This can be done since all the essential arguments in~\cite[Proof of Theorem~5]{SupconvDPG} are local.
  Then, observe that $\norm{\gamma_{0,\cS}(\cdot)}{1/2,\cS}\leq \norm{\cdot}{H^1(\Omega)}$ by definition of the norms.
  Finally, applying the approximation properties~\eqref{eq:approx:L2}--\eqref{eq:approx:SZ} and putting altogether
  finishes the proof.
\end{proof}

\subsection{Mixed formulation of practical DPG method}\label{sec:mixed}
The practical DPG method~\eqref{eq:practicalDPG} can be reformulated as a mixed problem, see,
e.g.~\cite{BoumaGH_DPGconvRates}. Recall that we made the assumption of the existence of a Fortin
operator~\eqref{eq:fortin}. 
The mixed DPG formulation then reads:
Find $(\uu_h,\eeps_{hk})\in U_h\times V_{hk}$ such that
\begin{subequations}\label{eq:mixedDPG}
  \begin{alignat}{4}
    &\ip{\eeps_{hk}}{\vv_{hk}}_V + b(\uu_h,\vv_{hk}) &\,=\,& F(\vv_{hk}) &\qquad&\text{for all } \vv_{hk}\in V_{hk},
    \label{eq:mixedDPG:a}\\
    &b(\ww_h,\eeps_{hk}) &\,=\,& 0 &\qquad&\text{for all } \ww_h \in U_h. \label{eq:mixedDPG:b}
  \end{alignat}
\end{subequations}
The function $\eeps_{hk}\in V_{hk}$ is called the error function and it holds
\begin{align*}
  \norm{\eeps_{hk}}V \lesssim \norm{\uu-\uu_h}U,
\end{align*}
under the assumption~\eqref{eq:fortin}, see~\cite[Theorem~2.1]{DPGaposteriori}. 
Note that the solution $\uu_h$ in~\eqref{eq:mixedDPG} is identical to the solution of~\eqref{eq:practicalDPG}.
Setting $\eeps:=0$ we have that $(\uu,\eeps)\in U\times V$ satisfies the mixed formulation for all test functions
$(\ww,\vv)\in U\times V$.
In particular, we have Galerkin orthogonality
\begin{align}\label{eq:orthogonality}
  a( (\uu-\uu_h),(\eeps-\eeps_{hk}), (\ww_h,\vv_{hk}) ) = 0 \quad\text{for all } (\ww_h,\vv_{hk}) \in U_h\times V_{hk},
\end{align}
where $a( (\ww,\vv), (\delta\ww,\delta\vv) ) := b(\ww,\delta\vv) + \ip{\vv}{\delta\vv}_V - b(\delta\ww,\vv)$ for all
$\ww,\delta\ww\in U$, $\vv,\delta\vv\in V$.

\subsection{Auxiliary results}\label{sec:aux}
Recall the adjoint problem~\eqref{eq:adjoint} with $g\in L^2(\Omega)$, $\g=0$,
\begin{align}\label{eq:adjoint:2}
\begin{split}
  -\div\ttau - \bbeta\cdot\ttau + \gamma v &= g, \\
  \nabla v - \CC\ttau &= 0, \\
  v|_\Gamma &=0.
\end{split}
\end{align}
Note that $\vv = (v,\ttau)\in H_0^1(\Omega)\times \Hdivset\Omega\subset V$.
In particular, there exists a unique $\ww\in U$ with $\Theta\ww = \vv$, since $\Theta : U \to V$ is an isomorphism.
Note that by the definition of the trial-to-test operator~\eqref{def:ttt}, the element $\ww$ depends on the choice of scalar
products in $V$.
This is investigated in the following result.
\begin{lemma}\label{lem:w:rep}
  Let $g\in L^2(\Omega)$ and let $\vv:=(v,\ttau)\in H_0^1(\Omega)\times \Hdivset\Omega$ denote the solution
  of~\eqref{eq:adjoint:2}.
  The unique element $\ww\in U$ with $\Theta\ww = \vv$ has the following representation depending on the cases
  from Section~\ref{sec:ass}:
  \begin{itemize}
    \item Case~\ref{case:a} ($\norm\cdot{V} = \VnormQopt\cdot$)
      \begin{align*}
        \ww = (g,0,0,0) + (u^*,\ssigma^*,\gamma_{0,\cS}u^*,\gamma_{\normal,\cS}\ssigma^*),
      \end{align*}
      where $(u^*,\ssigma^*)\in H_0^1(\Omega)\times \Hdivset\Omega$ solves~\eqref{eq:model} with $f = v$ and $\ff =
      \ttau$.
    \item Case~\ref{case:b} ($\norm\cdot{V} = \VnormStd\cdot$)
      \begin{align*}
        \ww = (g-\gamma v,0,0,\gamma_{\normal,\cS}\ttau) 
        + (u^*,\ssigma^*,\gamma_{0,\cS}u^*,\gamma_{\normal,\cS}\ssigma^*),
      \end{align*}
      where $(u^*,\ssigma^*)\in H_0^1(\Omega)\times \Hdivset\Omega$ solves~\eqref{eq:model} with $f = \gamma(\gamma
      v-g)-\div\ttau+v$ and $\ff = \ttau$.
    \item Case~\ref{case:c} ($\norm\cdot{V} = \VnormSimple\cdot$)
      \begin{align*}
        \ww = (g-\gamma v,0,0,\gamma_{\normal,\cS}(\CC\ttau)) 
        + (u^*,\ssigma^*,\gamma_{0,\cS}u^*,\gamma_{\normal,\cS}\ssigma^*),
      \end{align*}
      where $(u^*,\ssigma^*)\in H_0^1(\Omega)\times \Hdivset\Omega$ solves~\eqref{eq:model} with $f = \gamma(\gamma
      v-g)-\div(\CC\ttau)+v$ and $\ff = \CC^{-1}\ttau$.
  \end{itemize}
  Moreover,
  \begin{align}\label{eq:reg:dual}
    \norm{v}{H^2(\Omega)} + \norm{\ttau}{\HH^1(\TT)} + \norm{u^*}{H^2(\Omega)} + \norm{\ssigma^*}{\HH^1(\TT)}
    \leq C \norm{g}{}.
  \end{align}
  For Case~\ref{case:c} it also holds that $\ttau,\ssigma^*\in \HH^1(\Omega)$.
\end{lemma}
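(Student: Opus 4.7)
The plan is to verify for each of the three cases that the proposed element $\ww$ satisfies the defining relation $\ip{\vv}{\cchi}_V = b(\ww,\cchi)$ for all $\cchi=(w,\zeta)\in V$, where $\vv$ is the prescribed adjoint solution. Existence and uniqueness of $\ww$ with $\Theta\ww=\vv$ follow from the Riesz representation theorem together with the inf--sup stability of $b(\cdot,\cdot)$ on $U\times V$, so the task reduces to identifying the representative.

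For Case~\ref{case:a} I would expand $\ip{\vv}{\cchi}_{V,\mathrm{qopt}}$ using the adjoint identities $\nabla v = \CC\ttau$ and $-\div\ttau - \bbeta\cdot\ttau + \gamma v = g$ (both valid globally because $v\in H_0^1(\Omega)$ and $\ttau\in\Hdivset\Omega$). The cross term built from $\CC^{1/2}\ttau-\CC^{-1/2}\pwnabla v$ collapses identically, while the first term reduces to $\ip{g}{-\pwdiv\zeta-\bbeta\cdot\zeta+\gamma w}$, leaving in total $\ip{g}{-\pwdiv\zeta-\bbeta\cdot\zeta+\gamma w} + \ip{\CC\ttau}{\zeta} + \ip{v}{w}$. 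On the other side, I would expand $b(\ww,\cchi)$ and integrate the two skeleton dualities element-wise to obtain
\[
  \dual{\gamma_{0,\cS}u^*}{\zeta\cdot\normal}_\cS = \ip{\nabla u^*}{\zeta} + \ip{u^*}{\pwdiv\zeta}, \quad
  \dual{\gamma_{\normal,\cS}\ssigma^*}{w}_\cS = \ip{\ssigma^*}{\pwnabla w} + \ip{\div\ssigma^*}{w},
\]
which is legitimate since $u^*\in H_0^1(\Omega)$ and $\ssigma^*\in\Hdivset\Omega$. The bulk terms containing $\pwdiv\zeta$ and $\pwnabla w$ from the smooth part then cancel, leaving $\ip{g}{-\pwdiv\zeta-\bbeta\cdot\zeta+\gamma w} + \ip{\nabla u^*-\bbeta u^*+\CC\ssigma^*}{\zeta} + \ip{\div\ssigma^*+\gamma u^*}{w}$. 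Matching coefficients of $w$ and $\zeta$ with the corresponding terms of $\ip{\vv}{\cchi}_{V,\mathrm{qopt}}$ forces exactly the primal system~\eqref{eq:model} for $(u^*,\ssigma^*)$ with $f=v$ and $\ff=\ttau$.

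Cases~\ref{case:b} and~\ref{case:c} follow the same template with $\bbeta=0$, but the test norm no longer realises the adjoint structure, so the correction term must absorb additional trace data. I would simplify $\ip{\pwdiv\ttau}{\pwdiv\zeta}$ via $-\div\ttau+\gamma v=g$, and rewrite $\ip{\CC^{-1}\pwnabla v}{\pwnabla w}=\ip{\ttau}{\pwnabla w}$ (respectively $\ip{\pwnabla v}{\pwnabla w}=\ip{\CC\ttau}{\pwnabla w}$ in Case~\ref{case:c}) by element-wise integration by parts. This yields the skeleton contribution $\dual{\gamma_{\normal,\cS}\ttau}{w}_\cS$ (respectively $\dual{\gamma_{\normal,\cS}(\CC\ttau)}{w}_\cS$), which cancels exactly against the trace component of the explicit first summand of the proposed $\ww$. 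In Case~\ref{case:c}, the normal trace of $\CC\ttau$ makes global sense because $\CC\ttau=\nabla v\in\HH^1(\Omega)\subset\Hdivset\Omega$, which relies on $v\in H^2(\Omega)$ and the global Lipschitz regularity of $\CC$. The remaining bulk bookkeeping matches the $w$-coefficient to $f=\gamma(\gamma v - g)-\div\ttau+v$ (resp.\ with $\div(\CC\ttau)$) and the $\zeta$-coefficient to $\ff=\ttau$ (resp.\ $\ff=\CC^{-1}\ttau$).

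Finally, the regularity estimate~\eqref{eq:reg:dual} follows by applying~\eqref{eq:assReg} twice: first to the adjoint problem, yielding $\norm{v}{H^2(\Omega)}+\norm{\ttau}{\HH^1(\TT)}\lesssim\norm{g}{}$, and then to the primal problem solved by $(u^*,\ssigma^*)$ with the data identified above. The main technical obstacle, beyond careful integration-by-parts bookkeeping on the skeleton, is checking in each case that these derived data are admissible in the sense of Section~\ref{sec:model}, i.e.\ $f\in L^2(\Omega)$ and $\ff\in\HH^1(\TT)\cap\Hdivset\Omega$; the coefficient regularity in Table~\ref{tab:ass} is precisely what is needed. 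The additional $\HH^1(\Omega)$ regularity of $\ttau$ and $\ssigma^*$ in Case~\ref{case:c} follows from $\ttau=\CC^{-1}\nabla v$ and $\ssigma^*=\CC^{-1}(\ttau-\nabla u^*)$ together with the global Lipschitz assumption on $\CC$ and $v,u^*\in H^2(\Omega)$.
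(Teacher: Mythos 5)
Your proposal is correct and follows essentially the same route as the paper: reduce the test-space inner product via the adjoint identities $\CC\ttau=\nabla v$ and $-\div\ttau-\bbeta\cdot\ttau+\gamma v=g$, peel off the explicit part of $\ww$ that reproduces the matching terms of $b$, and absorb the leftover load terms $\ip{v}{\mu}+\ip{\CC\ttau}{\llambda}$ (resp.\ their analogues in Cases~\ref{case:b},~\ref{case:c}) through an auxiliary primal problem whose ultra-weak formulation supplies exactly those terms, with the regularity bound and the extra $\HH^1(\Omega)$ statement in Case~\ref{case:c} obtained exactly as in the paper. The only cosmetic difference is that you verify the identity for $(u^*,\ssigma^*)$ by element-wise integration by parts and coefficient matching, whereas the paper simply invokes the already-derived equivalence of the strong and ultra-weak formulations; these are the same computation.
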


\begin{proof}
  \textbf{Case~\ref{case:a} }
  Recall that $\ip{\Theta\ww}{(\mu,\llambda)}_V = b(\ww,(\mu,\llambda))$ for all $(\mu,\llambda)\in V$.
  With the inner product in $V$ and $\pwdiv\ttau=\div\ttau$, $\pwnabla v = \nabla v$ we have for $(\mu,\llambda)\in V$
  that
  \begin{align*}
    \ip\vv{(\mu,\llambda)}_V &= 
    \ip{-\div\ttau-\bbeta\cdot\ttau+\gamma v}{-\pwdiv\llambda-\bbeta\cdot\llambda+\gamma\mu}
    \\ &\qquad + \ip{\CC^{1/2}\ttau-\CC^{-1/2}\nabla v}{\CC^{1/2}\llambda-\CC^{-1/2}\pwnabla\mu} \\
    &\qquad + \ip{\CC\ttau}\llambda + \ip{v}\mu.\\
    &= \ip{g}{-\pwdiv\llambda-\bbeta\cdot\llambda+\gamma\mu} + \ip{\CC\ttau}\llambda + \ip{v}\mu \\
    &= b((g,0,0,0),(\mu,\llambda)) +  \ip{\CC\ttau}\llambda + \ip{v}\mu.
  \end{align*}
  Let $(u^*,\ssigma^*)\in H_0^1(\Omega)\times \Hdivset\Omega$ solve the (primal) problem~\eqref{eq:model} with $f = v\in
  L^2(\Omega)$ and $\ff = \ttau \in\Hdivset\Omega$. In particular, $(u^*,\ssigma^*)$ solves the ultra-weak formulation~\eqref{eq:uwf},
  i.e.,
  \begin{align*}
    b((u^*,\ssigma^*,\gamma_{0,\cS}u^*,\gamma_{\normal,\cS}\ssigma^*),(\mu,\llambda)) = \ip{\CC\ttau}\llambda + \ip{v}\mu
    \quad\text{for all }(\mu,\llambda)\in V.
  \end{align*}
  Defining $\ww := (g,0,0,0) + (u^*,\ssigma^*,\gamma_{0,\cS}u^*,\gamma_{\normal,\cS}\ssigma^*)$ and putting altogether
  shows
  \begin{align*}
    \ip\vv{(\mu,\llambda)}_V = b( (g,0,0,0),(\mu,\llambda)) + 
    b( (u^*,\ssigma^*,\gamma_{0,\cS}u^*,\gamma_{\normal,\cS}\ssigma^*),(\mu,\llambda)) = b(\ww,(\mu,\llambda)).
  \end{align*}
  Thus, $\Theta\ww=\vv$.

  \noindent
  \textbf{Case~\ref{case:b} }
  The scalar product in this case is given by
  \begin{align*}
    \ip{(v,\ttau)}{(\mu,\llambda)}_V = \ip{\div \ttau}{\pwdiv\llambda} + \ip{\CC\ttau}{\llambda}
    + \ip{\CC^{-1}\nabla v}{\pwnabla\mu} + \ip{v}\mu.
  \end{align*}
  Recall that $\bbeta=0$ and note that $\div\ttau = -g +\gamma v$ by~\eqref{eq:adjoint:2}. 
  Therefore,
  \begin{align*}
    \ip{\div\ttau}{\pwdiv\llambda} = \ip{g-\gamma v}{-\pwdiv\llambda} 
    &= \ip{g-\gamma v}{-\pwdiv\llambda+\gamma \mu} + \ip{\gamma(\gamma v-g)}{\mu} \\
    &= b( (g-\gamma v,0,0,0),(\mu,\llambda)) + \ip{\gamma(\gamma v-g)}{\mu}.
  \end{align*}
  With $\CC\ttau=\nabla v$ and piecewise integration by parts we obtain
  \begin{align*}
    \ip{\CC^{-1}\nabla v}{\pwnabla \mu} = \ip{\ttau}{\pwnabla \mu} &= 
    \dual{\gamma_{\normal,\cS}\ttau}\mu_\cS + \ip{-\div\ttau}{\mu} \\
    &=b((0,0,0,\gamma_{\normal,\cS}\ttau),(\mu,\llambda)) + \ip{-\div\ttau}{\mu}.
  \end{align*}
  Thus, 
  \begin{align*}
    \ip{(v,\ttau)}{(\mu,\llambda)}_V &= \ip{\div \ttau}{\pwdiv\llambda} + \ip{\CC\ttau}{\llambda}
    + \ip{\CC^{-1}\nabla v}{\pwnabla\mu} + \ip{v}\mu \\
    &= b( (g-\gamma v,0,0,\gamma_{\normal,\cS}\ttau),(\mu,\llambda)) 
    + \ip{\gamma(\gamma v-g)-\div\ttau+v}\mu + \ip{\CC\ttau}\llambda.
  \end{align*}
  Defining $\ww := (g-\gamma v,0,0,\gamma_{\normal,\cS}\ttau) 
  + (u^*,\ssigma^*,\gamma_{0,\cS}u^*,\gamma_{\normal,\cS}\ssigma^*)$, where $(u^*,\ssigma^*)$
  solves~\eqref{eq:model} with data $f = \gamma(\gamma v-g)-\div\ttau+v$, $\ff = \ttau$, shows
  \begin{align*}
    \ip{(v,\ttau)}{(\mu,\llambda)}_V  = b(\ww,(\mu,\llambda)) \quad\text{for all }(\mu,\llambda)\in V.
  \end{align*}

  \noindent
  \textbf{Case~\ref{case:c} }
  The proof is similar as for Case~\ref{case:b}. Thus, we only give details on the important differences.
  We have to take care of the terms involving the matrix $\CC$. 
  Note that by the assumptions on $\CC$ it holds $\CC^{-1}\ttau
  \in \Hdivset\Omega$ and $\CC\ttau\in\Hdivset\Omega$ as well. We have
  \begin{align*}
    \ip{\ttau}{\llambda} = \ip{\CC\CC^{-1}\ttau}\llambda,
  \end{align*}
  and using $\CC\ttau=\nabla v$ and integration by parts,
  \begin{align*}
    \ip{\nabla v}{\pwnabla \mu} = \ip{\CC\ttau}{\pwnabla\mu} = \dual{\gamma_{\normal,\cS}(\CC\ttau)}\mu_\cS 
    - \ip{\div(\CC\ttau)}\mu.
  \end{align*}
  Defining $\ww := (g-\gamma v,0,0,\gamma_{\normal,\cS}(\CC\ttau)) 
  + (u^*,\ssigma^*,\gamma_{0,\cS}u^*,\gamma_{\normal,\cS}\ssigma^*)$, where $(u^*,\ssigma^*)$
  solves~\eqref{eq:model} with data $f = \gamma(\gamma v-g)-\div(\CC\ttau)+v$, $\ff = \CC^{-1}\ttau$, shows
  \begin{align*}
    \ip{(v,\ttau)}{(\mu,\llambda)}_V  &= \ip{\div \ttau}{\pwdiv\llambda} + \ip{\ttau}{\llambda}
    + \ip{\nabla v}{\pwnabla\mu} + \ip{v}\mu \\
    &= b(\ww,(\mu,\llambda)) \quad\text{for all }(\mu,\llambda)\in V.
  \end{align*}

  Finally, note that for all three cases it is straightforward to prove $\norm{f}{}+\norm{\ff}{\HH^1(\TT)} \lesssim \norm{g}{}$.
  Then, \eqref{eq:assReg} shows~\eqref{eq:reg:dual}. Moreover, in Case~\ref{case:c} we have $\ttau = \CC^{-1}\nabla v \in
  \HH^1(\Omega)$ and $\ff = \CC^{-1}\ttau\in\HH^1(\Omega)$, thus, $\ssigma^* = \CC^{-1}\ttau-\CC^{-1}\nabla u^*\in\HH^1(\Omega)$.
  This finishes the proof.
\end{proof}

\begin{lemma}\label{lem:est}
  Let $\uu = (u,\ssigma,\wat u,\wat\sigma) \in U$ denote the solution of~\eqref{eq:uwf} and let
  $\uu_h=(u_h,\ssigma_h,\wat u_h,\wat\sigma_h) \in U_h
  \in\{U_{hp},U_{hp}^+\}$ denote the solution of~\eqref{eq:practicalDPG}.
  Suppose $(g,0,0,0)\in U_h$, i.e., $g\in\PP^p(\TT)$ if $U_{h}=U_{hp}$ resp. $g\in\PP^{p+1}(\TT)$ if
  $U_h=U_{hp}^+$.
  Moreover, suppose that
  \begin{itemize}
    \item $\PP_{c,0}^1(\TT)\times\RT^{p}(\TT)\subset V_{hk}$ if $U_h=U_{hp}$,
    \item $\PP_{c,0}^1(\TT)\times\RT^{p+1}(\TT)\subset V_{hk}$ if $U_h=U_{hp}^+$.
  \end{itemize}
  It holds that
  \begin{align*}
    \ip{u-u_h}{g} \leq C h \norm{\uu-\uu_h}U \norm{g}{}.
  \end{align*}
  The constant $C>0$ only depends on $\Omega$, $\CC$, $\bbeta$, $\gamma$, $p\in\N_0$, and shape-regularity of $\TT$.
\end{lemma}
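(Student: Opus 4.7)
The plan is to follow the duality argument outlined in the introduction, but accounting for the practical DPG via the mixed formulation of Section~\ref{sec:mixed}. First I would introduce the adjoint solution $\vv=(v,\ttau)\in H_0^1(\Omega)\times\Hdivset\Omega$ of~\eqref{eq:adjoint:2} with data $g$ (and $\g=0$). Substituting into $b(\uu-\uu_h,\vv)$, the trace dualities $\dual{\wat u-\wat u_h}{\ttau\cdot\normal}_\cS$ and $\dual{\wat\sigma-\wat\sigma_h}{v}_\cS$ vanish by~\eqref{eq:jumps} since $v\in H_0^1(\Omega)$ and $\ttau\in\Hdivset\Omega$, while the volume contributions collapse to $\ip{u-u_h}{g}$ via the adjoint equations. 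Hence $\ip{u-u_h}{g}=b(\uu-\uu_h,\vv)$.

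Next I would invoke Lemma~\ref{lem:w:rep} to write $\vv=\Theta\ww$ with $\ww\in U$ in its case-dependent structured form. Combining the defining relation $\ip{\Theta\ww}{\cdot}_V=b(\ww,\cdot)$ with the mixed identities $b(\uu-\uu_h,\vv_{hk})=\ip{\eeps_{hk}}{\vv_{hk}}_V$ and $b(\ww_h,\eeps_{hk})=0$ from~\eqref{eq:mixedDPG}, one obtains the master identity for every $(\ww_h,\vv_{hk})\in U_h\times V_{hk}$,
\begin{align*}
  b(\uu-\uu_h,\vv)
  = b(\uu-\uu_h,\vv-\vv_{hk}) + b(\ww-\ww_h,\eeps_{hk}) - \ip{\eeps_{hk}}{\vv-\vv_{hk}}_V.
\end{align*}
Boundedness of $b$ and the error-function bound $\norm{\eeps_{hk}}{V}\lesssim\norm{\uu-\uu_h}{U}$ reduce the claim to showing $\norm{\ww-\ww_h}{U}+\norm{\vv-\vv_{hk}}{V}\lesssim h\norm{g}{}$ for suitably chosen $\ww_h$ and $\vv_{hk}$.

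For the test-space approximation I would take $\vv_{hk}:=(\Pi_\nabla^1 v,\Pi_\div^p\ttau)$, replacing $\Pi_\div^p$ by $\Pi_\div^{p+1}$ in the $U_{hp}^+$ case; by the inclusion hypothesis this lies in $V_{hk}$. For $\ww_h\in U_h$ I exploit the decomposition from Lemma~\ref{lem:w:rep}: in Case~\ref{case:a} the polynomial piece $(g,0,0,0)$ already belongs to $U_h$, and the regular remainder $(u^\star,\ssigma^\star,\gamma_{0,\cS}u^\star,\gamma_{\normal,\cS}\ssigma^\star)$ is approximated via Theorem~\ref{thm:approx} (with $p=0$), giving $\norm{\ww-\ww_h}{U}\lesssim h\norm{g}{}$ thanks to~\eqref{eq:reg:dual}. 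In Cases~\ref{case:b} and~\ref{case:c} I would absorb the extra non-polynomial summands $-\gamma v$ and $\gamma_{\normal,\cS}\ttau$ (resp.~$\gamma_{\normal,\cS}(\CC\ttau)$) into the regular remainder and project component-wise using~\eqref{eq:approx:L2}--\eqref{eq:approx:RT}, with regularity again coming from Lemma~\ref{lem:w:rep} and the coefficient assumptions of Table~\ref{tab:ass}.

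The hard part will be the bound on $\norm{\vv-\vv_{hk}}{V}$, specifically the broken-divergence contribution $\norm{\pwdiv(\ttau-\Pi_\div^p\ttau)}{}$ that appears in every test norm of~\eqref{eq:defVnorms}. The commutativity $\pwdiv\Pi_\div^p=\Pi^p\pwdiv$ rewrites it as $\norm{(I-\Pi^p)\div\ttau}{}$; the adjoint equation gives $\div\ttau=\gamma v-g-\bbeta\cdot\ttau$, and the crucial cancellation $\Pi^p g=g$ --- which is precisely why the lemma requires $(g,0,0,0)\in U_h$ --- reduces the estimate to $\norm{(I-\Pi^p)(\gamma v-\bbeta\cdot\ttau)}{}\lesssim h\norm{g}{}$, where the $C^1(\TT)$-regularity of $\gamma,\bbeta$ together with $v\in H^2(\Omega)$ and $\ttau\in\HH^1(\TT)$ from~\eqref{eq:reg:dual} supply the needed factor of $h$. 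Verifying the remaining test-norm contributions (involving $\pwnabla(v-\Pi_\nabla^1 v)$ and $\CC$-weighted combinations of $\ttau-\Pi_\div^p\ttau$) uniformly across the three cases of Table~\ref{tab:ass} is routine but tedious; a secondary subtlety is the component-by-component reshuffling in Cases~\ref{case:b},~\ref{case:c}, where the representation of $\ww$ is not cleanly ``polynomial $+$ regular'' and the trace components need to be matched to the field components before Theorem~\ref{thm:approx} can be applied.
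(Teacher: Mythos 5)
Your proposal is correct and follows essentially the same route as the paper: your ``master identity'' is exactly the expansion of the paper's Galerkin-orthogonality identity for the mixed bilinear form $a(\cdot,\cdot)$ of Section~\ref{sec:mixed}, and the choices of $\vv_{hk}$ and $\ww_h$, the cancellation $\Pi^p g=g$ in the divergence term, and the use of Lemma~\ref{lem:w:rep} with Theorem~\ref{thm:approx} and~\eqref{eq:reg:dual} all match the paper's argument.
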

\begin{proof}
  Let $\vv=(v,\ttau)\in V$ denote the solution of the adjoint problem~\eqref{eq:adjoint:2} with the given $g\in L^2(\Omega)$.
  Let $\ww=\Theta^{-1}\vv\in U$ denote the element from Lemma~\ref{lem:w:rep}.
  Since $(v,\ttau)\in H_0^1(\Omega)\times\Hdivset\Omega$, the identities~\eqref{eq:jumps} and the adjoint
  problem~\eqref{eq:adjoint:2} imply that $\ip{u-u_h}{g} = b(\uu-\uu_h,\vv)$.
  With the bilinear form $a(\cdot,\cdot)$ of the mixed formulation of DPG (Section~\ref{sec:mixed}) and the fact that
  $b(\ww,\delta\vv) = \ip{\vv}{\delta\vv}_V = \ip{\delta\vv}\vv_V$ for all $\delta\vv\in V$, we infer
  \begin{align*}
    \ip{u-u_h}{g} = b(\uu-\uu_h,\vv) = a( (\uu-\uu_h,\eeps-\eeps_h), (\ww,\vv)).
  \end{align*}
  Here, $\eeps=0$ and $\eeps_h\in V_{hk}$ is the error function which satisfies $\norm{\eeps_h}V \lesssim
  \norm{\uu-\uu_h}U$ (see Section~\ref{sec:mixed}).
  This, Galerkin orthogonality and boundedness of the bilinear form $a(\cdot,\cdot)$ 
  show for arbitrary $(\ww_h,\vv_h)\in (U_h,V_{hk})$ that
  \begin{align*}
    \ip{u-u_h}{g} =a( (\uu-\uu_h,\eeps-\eeps_h), (\ww,\vv)) &= a( (\uu-\uu_h,\eeps-\eeps_h), (\ww-\ww_h,\vv-\vv_h))\\
    &\lesssim \norm{\uu-\uu_h}U\left(\norm{\ww-\ww_h}U + \norm{\vv-\vv_h}V\right).
  \end{align*}
  It remains to prove $\norm{\ww-\ww_h}U + \norm{\vv-\vv_h}V\lesssim h\norm{g}{}$.
  We estimate $\norm{\vv-\vv_h}{V}$ for all three cases simultaneously and handle the estimation of
  $\norm{\ww-\ww_h}U$ for the three cases separately, since the representation of $\ww$ by Lemma~\ref{lem:w:rep} depends
  on the choice of norms in $V$.

  We start with the estimation of $\norm{\vv-\vv_h}V$:
  We first consider $U_h=U_{hp}$. Note that $\PP_{c,0}^1(\TT)\times\RT^{p}(\TT)\subset V_{hk}$. Choose $\vv_h =
  (\Pi_\nabla^1 v,\Pi_\div^p\ttau)\in V_{hk}$. Recall that all norms under consideration are equivalent, i.e.,
  $\VnormQopt\cdot\simeq \VnormStd\cdot \simeq \VnormSimple\cdot$. Then, using the approximation
  properties~\eqref{eq:approx} together with~\eqref{eq:reg:dual}, we get
  \begin{align*}
    \norm{\vv-\vv_h}V &\simeq\VnormSimple{\vv-\vv_h} \leq \norm{v-\Pi_\nabla^1 v}{H^1(\Omega)} 
    + \norm{\ttau-\Pi_\div^p\ttau}{\Hdivset\Omega}
    \lesssim h\norm{g}{} + \norm{\div(\ttau-\Pi_\div^p\ttau)}{}.
  \end{align*}
  Then, for the remaining term the commutativity property of the Raviart-Thomas projection, the adjoint
  problem~\eqref{eq:adjoint:2} and $g\in\PP^p(\TT)$ yield
  \begin{align*}
    \norm{\div(\ttau-\Pi_\div^p\ttau)}{} = \norm{(1-\Pi^p)\div\ttau}{} = 
    \norm{(1-\Pi^p)(-g-\bbeta\cdot\ttau+\gamma v)}{} = 
    \norm{(1-\Pi^p)(\gamma v - \bbeta\cdot\ttau)}{}.
  \end{align*}
  Using the approximation properties of $\Pi^0$,  $\gamma\in C^1(\TT)$, $\bbeta\in C^1(\TT)^d$, and~\eqref{eq:reg:dual}
  shows
  \begin{align*}
    \norm{(1-\Pi^p)(\gamma v - \bbeta\cdot\ttau)}{} \leq \norm{(1-\Pi^0)(\gamma v - \bbeta\cdot\ttau)}{} 
    \lesssim h\norm{\pwnabla(\gamma v-\bbeta\cdot\ttau)}{} \lesssim h\norm{g}{}.
  \end{align*}
  Therefore, we obtain $\norm{\vv-\vv_h}V\lesssim h\norm{g}{}$. If $U_h=U_{hp}^+$, then we choose
  $\vv_h=(\Pi_\nabla^1,\Pi_\div^{p+1}\ttau)\in V_{hk}$. With the same lines of proof we also infer $\norm{\vv-\vv_h}V\lesssim
  h\norm{g}{}$.

  It only remains to estimate $\norm{\ww-\ww_h}U$.
  We distinguish between the three different cases:

  \noindent
  \textbf{Case~\ref{case:a} }
  By Lemma~\ref{lem:w:rep} we have $\ww = (g,0,0,0) + \widetilde\ww$, where 
  $\widetilde\ww = (u^*,\ssigma^*,\gamma_{0,\cS}u^*,\gamma_{\normal,\cS}\ssigma^*)$.
  We choose $\ww_h = (g,0,0,0) + \widetilde \ww_h$, where $\widetilde\ww_h\in U_{h0}\subseteq U_h$ 
  is the best-approximation of $(u^*,\ssigma^*,\gamma_{0,\cS}u^*,\gamma_{\normal,\cS}\ssigma^*)$ with respect to $\norm\cdot{U}$.
  From Proposition~\ref{prop:dpg} together with Theorem~\ref{thm:approx} and~\eqref{eq:reg:dual} it follows that
  \begin{align*}
    \norm{\ww-\ww_h}U = \norm{\widetilde\ww-\widetilde\ww_h}U \lesssim h\norm{g}{}.
  \end{align*}

  \noindent
  \textbf{Case~\ref{case:b} }
  By Lemma~\ref{lem:w:rep} we have $\ww = (g-\gamma v,0,0,\gamma_{\normal,\cS}\ttau) + \widetilde\ww$ and choose
  \begin{align*}
    \ww_h = (g-\Pi^0\gamma v,0,0,\gamma_{\normal,\cS}\Pi_\div^0\ttau) + \widetilde\ww_h,
  \end{align*}
  where $\widetilde\ww_h\in U_{h0}$ is the best approximation of $\widetilde\ww$ with respect to $\norm\cdot{U}$.
  Note that the same arguments as before lead to $\norm{\widetilde\ww-\widetilde\ww_h}U\lesssim h \norm{g}{}$.
  Therefore,
  \begin{align*}
    \norm{\ww-\ww_h}U &\leq \norm{(1-\Pi^0)\gamma v}{} + \norm{\gamma_{\normal,\cS}(\ttau-\Pi_\div^0\ttau)}{-1/2,\cS}
    + \norm{\widetilde\ww-\widetilde\ww_h}U \lesssim h\norm{g}{},
  \end{align*}
  where we used~\eqref{eq:approx} and the approximation property of $\gamma_{\normal,\cS}\Pi_\div^p$ 
  in the $H^{-1/2}(\cS)$ norm (see the proof of Theorem~\ref{thm:approx}) together with~\eqref{eq:reg:dual}.

  \noindent
  \textbf{Case~\ref{case:c} }
  The proof follows as for Case~\ref{case:b}. Therefore, we omit the details.
\end{proof}

\subsection{Proof of Theorem~\ref{thm:L2}}\label{proof:L2}
  The best approximation property of $\Pi^p$ and the triangle inequality show that
  \begin{align*}
    \norm{u-\Pi^pu}{}\leq\norm{u-u_h}{} \leq \norm{u-\Pi^p u}{} + \norm{\Pi^p(u-u_h)}{}.
  \end{align*}
  With $g:=\Pi^p u- u_h\in\PP^p(\TT)$ observe that
  \begin{align*}
    \norm{g}{}^2 = \ip{g}{g} = \ip{\Pi^p(u-u_h)}{g} = \ip{u-u_h}{g}.
  \end{align*}
  We apply Lemma~\ref{lem:est}, and the approximation result from Theorem~\ref{thm:approx} to see
  \begin{align*}
    \norm{g}{}^2 = \ip{u-u_h}{g} \lesssim h \norm{\uu-\uu_h}U \norm{g}{} 
    \lesssim h h^{p+1} (\norm{u}{H^{p+2}(\Omega)} + \norm{\ssigma}{\HH^{p+1}(\TT)})\norm{g}{}.
  \end{align*}
  Dividing by $\norm{g}{}$ we infer
  \begin{align*}
    \norm{u-u_h}{} \leq \norm{u-\Pi^p u}{} + \norm{g}{} \leq \norm{u-\Pi^p u}{} +
    Ch^{p+2}(\norm{u}{H^{p+2}(\Omega)} + \norm{\ssigma}{\HH^{p+1}(\TT)}),
  \end{align*}
  which finishes the proof.
\qed

\subsection{Proof of Theorem~\ref{thm:augment}}\label{proof:augment}
  The proof is similar to the one for Theorem~\ref{thm:L2}. We consider 
  \begin{align*}
    \norm{u-u_h^+}{} \leq \norm{u-\Pi^{p+1}u}{} + \norm{\Pi^{p+1}u-u_h^+}{}.
  \end{align*}
  Define $g:=\Pi^{p+1}u-u_h^+\in\PP^{p+1}(\TT)$. To estimate the second term we argue as in the proof of
  Theorem~\ref{thm:L2} to obtain $\norm{g}{} \lesssim h^{p+2}(\norm{u}{H^{p+2}(\Omega)} +
  \norm{\ssigma}{\HH^{p+1}(\TT)})$. 
  The first term is estimated with the approximation property~\eqref{eq:approx:L2} of the $L^2$ projection, i.e.,
  \begin{align*}
    \norm{u-\Pi^{p+1}u}{} \lesssim h^{p+2}\norm{u}{H^{p+2}(\Omega)}.
  \end{align*}
  This finishes the proof.
\qed

\subsection{Proof of Theorem~\ref{thm:postproc}}\label{proof:postproc}
Note that~\eqref{eq:postproc:b} is equivalent to $\Pi^0\widetilde u_h = \Pi^0 u_h$.
This yields
  \begin{align*}
    \norm{u-\widetilde u_h}{} \leq \norm{(1-\Pi^0)(u-\widetilde u_h)}{} 
    + \norm{\Pi^0(u-\widetilde u_h)}{} \lesssim h \norm{\pwnabla (u-\widetilde u_h)}{}+\norm{\Pi^0(u-u_h)}{},
  \end{align*}
  where we have used the local approximation property of $\Pi^0$.
  We define $g:=\Pi^0(u-u_h)$. Applying Lemma~\ref{lem:est} and Theorem~\ref{thm:approx} shows
  \begin{align*}
    \norm{g}{}^2 = \ip{\Pi^0(u-u_h)}{g} = \ip{u-u_h}{g} \lesssim h \norm{\uu-\uu_h}U\norm{g}{} \lesssim 
    h^{p+2}(\norm{u}{H^{p+2}(\Omega)}+\norm{\ssigma}{\HH^{p+1}(\TT)})\norm{g}{}.
  \end{align*}
  It remains to estimate $\norm{\pwnabla(u-\widetilde u_h)}{}$. The proof follows standard arguments from finite element
  analysis and is included for completeness.
  To that end define $\overline u_h\in\PP^{p+1}(\TT)$ as the solution of the auxiliary Neumann problem
  \begin{align*}
    \ip{\nabla \overline u_h}{\nabla v_h}_T &= \ip{\CC\ff-\CC\ssigma+\bbeta u}{\nabla v_h}_T \quad\text{for all }
    v_h\in\PP^{p+1}(T), \\
    \ip{\overline u_h}1_T &= 0
  \end{align*}
  for all $T\in\TT$.
  Then,
  \begin{align*}
    \norm{\pwnabla(\overline u_h-\widetilde u_h)}{}^2 &= \ip{-\CC(\ssigma-\ssigma_h)+\bbeta(u-u_h)}{\pwnabla(\overline
    u_h-\widetilde u_h)} \\
    &\lesssim \norm{\uu-\uu_h}U \norm{\pwnabla(\overline u_h-\widetilde u_h)}{}
    \lesssim h^{p+1}(\norm{u}{H^{p+2}(\Omega)}+\norm{\ssigma}{\HH^{p+1}(\TT)})
    \norm{\pwnabla(\overline u_h-\widetilde u_h)}{}.
  \end{align*}
  To estimate $\norm{\pwnabla(u-\overline u_h)}{}$ note that there holds Galerkin orthogonality
  $\ip{\pwnabla(u-\overline u_h)}{\pwnabla v_h} = 0$ for all $v_h\in\PP^{p+1}(\TT)$. Hence, 
  standard approximation results show
  \begin{align*}
    \norm{\pwnabla(u-\overline u_h)}{} = \min_{v_h\in\PP^{p+1}(\TT)} \norm{\pwnabla(u-v_h)}{}
    \lesssim h^{p+1}\norm{u}{H^{p+2}(\Omega)}.
  \end{align*}
  Putting altogether gives
  \begin{align*}
    \norm{u-\widetilde u_h}{} &\lesssim h\norm{\pwnabla(u-\widetilde u_h)}{} + \norm{g}{} \\
    &\lesssim 
    h(\norm{\pwnabla(u-\overline u_h)}{}+\norm{\pwnabla(\overline u_h-\widetilde u_h)}{})
    + h^{p+2}(\norm{u}{H^{p+2}(\Omega)}+\norm{\ssigma}{\HH^{p+1}(\TT)}) \\
    &\lesssim h^{p+2}(\norm{u}{H^{p+2}(\Omega)}+\norm{\ssigma}{\HH^{p+1}(\TT)}),
  \end{align*}
  which finishes the proof.
\qed


\section{Numerical Studies}\label{sec:ex}
In this section we present results of two numerical examples. 
Let $\Omega = (0,1)^2$ be a squared domain. Throughout we consider the manufactured solution
\begin{align*}
  u(x,y) = \sin(\pi x)\sin(\pi y), \quad(x,y)\in\Omega,
\end{align*}
which is smooth and satisfies $u|_\Gamma = 0$.

Let $\uu_h = (u_h,\ssigma_h,\wat u_h,\wat\sigma_h)\in U_{hp}$ and $\uu_h^+ =(u_h^+,\ssigma_h^+,\wat
u_h^+,\wat\sigma_h^+)\in U_{hp}^+$ denote the solutions of the practical DPG method~\eqref{eq:practicalDPG}
and let $\widetilde u_h\in\PP^{p+1}(\TT)$ be the postprocessed solution of $\uu_h$, see Section~\ref{sec:main:postproc}.
We present results for $p=0,1,2,3$, where we use the test space
\begin{align*}
  V_{hk} := \PP^{p+2}(\TT)\times \PP^{p+2}(\TT)^d.
\end{align*}
To verify our main results (Theorem~\ref{thm:L2}, Theorem~\ref{thm:augment}, and Theorem~\ref{thm:postproc}) we check
the convergence rates of the $L^2$ errors
\begin{align*}
  \norm{\Pi^p u - u_h}{}, \quad \norm{u-u_h^+}{}, \quad\text{and}\quad \norm{u-\widetilde u_h}{}.
\end{align*}
In all examples below we choose $\CC$ to be the identity matrix. Thus, $\VnormStd\cdot=\VnormSimple\cdot$ and
Cases~\ref{case:b},~\ref{case:c} are identical. The other coefficients are chosen such that 
the regularity assumptions~\ref{eq:assReg} are satisfied.

All computations start with the initial triangulation $\TT_1$ visualized in Figure~\ref{fig:mesh}.
\begin{figure}[htb]
  \begin{center}
    \begin{tikzpicture}
\begin{axis}[
    axis equal,
    width=0.45\textwidth,
    xlabel={$x$},
    ylabel={$y$},
]

\addplot[patch,color=white,
faceted color = black, line width = 1.5pt,
patch table ={elements.dat}] file{coordinates.dat};
\addplot[mark=*,color=gray,only marks] table[x index=0,y index=1] {coordinates.dat};
\end{axis}
\end{tikzpicture}
  \end{center}
  \caption{Initial triangulation $\TT_1$ of domain $\Omega=(0,1)^2$.}
  \label{fig:mesh}
\end{figure}
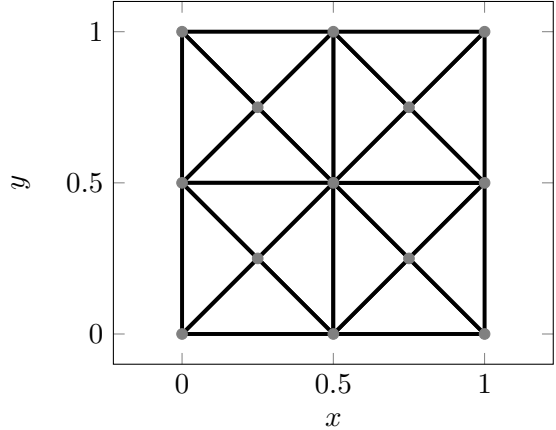

\subsection{Example 1}\label{sec:exp1}

\begin{table}[htb]
  \centering
  \begin{tabular}{|c|c|cc|cc|cc|cc|}
\hline
$p$ & $\#\TT$ & $\norm{u-u_h}{}$ & rate & $\norm{\Pi^p u-u_h}{}$ & rate & $\norm{u-u_h^+}{}$ & rate & $\norm{u-\widetilde u_h}{}$ & rate
\\ \hline\hline
\multirow{7}{*}{0}
& 16 & 1.94e-01 & --- & 7.41e-02 & --- & 8.37e-02 & --- & 1.23e-01 & --- \\
& 64 & 9.37e-02 & 1.05 & 1.85e-02 & 2.00 & 2.09e-02 & 2.01 & 3.21e-02 & 1.94 \\
& 256 & 4.64e-02 & 1.01 & 4.63e-03 & 2.00 & 5.20e-03 & 2.00 & 8.12e-03 & 1.98 \\
& 1024 & 2.32e-02 & 1.00 & 1.16e-03 & 2.00 & 1.30e-03 & 2.00 & 2.04e-03 & 2.00 \\
& 4096 & 1.16e-02 & 1.00 & 2.90e-04 & 2.00 & 3.25e-04 & 2.00 & 5.09e-04 & 2.00 \\
& 16384 & 5.79e-03 & 1.00 & 7.24e-05 & 2.00 & 8.13e-05 & 2.00 & 1.27e-04 & 2.00 \\
& 65536 & 2.89e-03 & 1.00 & 1.81e-05 & 2.00 & 2.03e-05 & 2.00 & 3.18e-05 & 2.00 \\
\hline
\multirow{6}{*}{1}
& 16 & 3.47e-02 & --- & 3.02e-03 & --- & 5.96e-03 & --- & 7.89e-03 & --- \\
& 64 & 8.86e-03 & 1.97 & 5.58e-04 & 2.44 & 8.72e-04 & 2.77 & 9.53e-04 & 3.05 \\
& 256 & 2.22e-03 & 1.99 & 7.92e-05 & 2.82 & 1.16e-04 & 2.92 & 1.18e-04 & 3.01 \\
& 1024 & 5.56e-04 & 2.00 & 1.02e-05 & 2.95 & 1.47e-05 & 2.98 & 1.48e-05 & 3.00 \\
& 4096 & 1.39e-04 & 2.00 & 1.29e-06 & 2.99 & 1.84e-06 & 2.99 & 1.84e-06 & 3.00 \\
& 16384 & 3.48e-05 & 2.00 & 1.62e-07 & 3.00 & 2.31e-07 & 3.00 & 2.30e-07 & 3.00 \\
\hline
\multirow{5}{*}{2}
& 16 & 4.51e-03 & --- & 2.55e-04 & --- & 3.51e-04 & --- & 6.14e-04 & --- \\
& 64 & 5.74e-04 & 2.98 & 1.30e-05 & 4.29 & 1.98e-05 & 4.14 & 4.18e-05 & 3.88 \\
& 256 & 7.20e-05 & 2.99 & 7.67e-07 & 4.09 & 1.21e-06 & 4.04 & 2.68e-06 & 3.96 \\
& 1024 & 9.01e-06 & 3.00 & 4.72e-08 & 4.02 & 7.50e-08 & 4.01 & 1.68e-07 & 3.99 \\
& 4096 & 1.13e-06 & 3.00 & 2.97e-09 & 3.99 & 4.69e-09 & 4.00 & 1.05e-08 & 4.00 \\
\hline
\multirow{4}{*}{3}
& 16 & 2.20e-04 & --- & 2.08e-05 & --- & 2.01e-05 & --- & 5.48e-05 & --- \\
& 64 & 1.39e-05 & 3.98 & 8.34e-07 & 4.64 & 8.38e-07 & 4.58 & 1.67e-06 & 5.03 \\
& 256 & 8.70e-07 & 4.00 & 2.82e-08 & 4.89 & 2.86e-08 & 4.87 & 5.18e-08 & 5.01 \\
& 1024 & 5.44e-08 & 4.00 & 9.08e-10 & 4.96 & 9.24e-10 & 4.95 & 1.62e-09 & 5.00 \\
\hline
\end{tabular}

  \medskip
  \caption{Errors and rates for the problem from Section~\ref{sec:exp1} with test norm
  $\norm\cdot{V}=\VnormQopt\cdot$.}
  \label{tab:exp1:opt}
\end{table}

\begin{table}[htb]
  \centering
  \begin{tabular}{|c|c|cc|cc|cc|cc|}
\hline
$p$ & $\#\TT$ & $\norm{u-u_h}{}$ & rate & $\norm{\Pi^p u-u_h}{}$ & rate & $\norm{u-u_h^+}{}$ & rate & $\norm{u-\widetilde u_h}{}$ & rate
\\ \hline\hline
\multirow{7}{*}{0}
& 16 & 1.92e-01 & --- & 6.88e-02 & --- & 7.86e-02 & --- & 8.48e-02 & --- \\
& 64 & 9.35e-02 & 1.04 & 1.73e-02 & 1.99 & 1.97e-02 & 1.99 & 2.17e-02 & 1.97 \\
& 256 & 4.64e-02 & 1.01 & 4.33e-03 & 2.00 & 4.94e-03 & 2.00 & 5.44e-03 & 1.99 \\
& 1024 & 2.32e-02 & 1.00 & 1.08e-03 & 2.00 & 1.23e-03 & 2.00 & 1.36e-03 & 2.00 \\
& 4096 & 1.16e-02 & 1.00 & 2.71e-04 & 2.00 & 3.09e-04 & 2.00 & 3.41e-04 & 2.00 \\
& 16384 & 5.79e-03 & 1.00 & 6.77e-05 & 2.00 & 7.71e-05 & 2.00 & 8.51e-05 & 2.00 \\
& 65536 & 2.89e-03 & 1.00 & 1.69e-05 & 2.00 & 1.93e-05 & 2.00 & 2.13e-05 & 2.00 \\
\hline
\multirow{6}{*}{1}
& 16 & 3.49e-02 & --- & 4.81e-03 & --- & 6.96e-03 & --- & 6.79e-03 & --- \\
& 64 & 8.87e-03 & 1.98 & 7.36e-04 & 2.71 & 9.71e-04 & 2.84 & 8.82e-04 & 2.95 \\
& 256 & 2.22e-03 & 2.00 & 9.82e-05 & 2.91 & 1.26e-04 & 2.95 & 1.12e-04 & 2.98 \\
& 1024 & 5.56e-04 & 2.00 & 1.25e-05 & 2.97 & 1.59e-05 & 2.99 & 1.41e-05 & 2.99 \\
& 4096 & 1.39e-04 & 2.00 & 1.57e-06 & 2.99 & 1.99e-06 & 3.00 & 1.76e-06 & 3.00 \\
& 16384 & 3.48e-05 & 2.00 & 1.96e-07 & 3.00 & 2.49e-07 & 3.00 & 2.20e-07 & 3.00 \\
\hline
\multirow{5}{*}{2}
& 16 & 4.53e-03 & --- & 4.38e-04 & --- & 5.07e-04 & --- & 5.22e-04 & --- \\
& 64 & 5.74e-04 & 2.98 & 2.53e-05 & 4.11 & 3.00e-05 & 4.08 & 3.25e-05 & 4.01 \\
& 256 & 7.20e-05 & 2.99 & 1.54e-06 & 4.04 & 1.85e-06 & 4.02 & 2.03e-06 & 4.00 \\
& 1024 & 9.01e-06 & 3.00 & 9.58e-08 & 4.01 & 1.15e-07 & 4.01 & 1.27e-07 & 4.00 \\
& 4096 & 1.13e-06 & 3.00 & 6.03e-09 & 3.99 & 7.22e-09 & 3.99 & 7.94e-09 & 4.00 \\
\hline
\multirow{4}{*}{3}
& 16 & 2.25e-04 & --- & 5.14e-05 & --- & 5.06e-05 & --- & 6.01e-05 & --- \\
& 64 & 1.40e-05 & 4.01 & 1.75e-06 & 4.88 & 1.73e-06 & 4.87 & 1.96e-06 & 4.94 \\
& 256 & 8.71e-07 & 4.00 & 5.62e-08 & 4.96 & 5.55e-08 & 4.96 & 6.20e-08 & 4.98 \\
& 1024 & 5.44e-08 & 4.00 & 1.80e-09 & 4.96 & 1.78e-09 & 4.96 & 1.96e-09 & 4.98 \\
\hline
\end{tabular}

  \medskip
  \caption{Errors and rates for the problem from Section~\ref{sec:exp1} with test norm
  $\norm\cdot{V}=\VnormSimple\cdot$.}
  \label{tab:exp1:std}
\end{table}

\begin{table}[htb]
  \centering
  \begin{tabular}{|c|c|cc|cc|cc|cc|}
\hline
$p$ & $\#\TT$ & $\norm{u-u_h}{}$ & rate & $\norm{\Pi^p u-u_h}{}$ & rate & $\norm{u-u_h^+}{}$ & rate & $\norm{u-\widetilde u_h}{}$ & rate
\\ \hline\hline
\multirow{7}{*}{0}
& 16 & 1.96e-01 & --- & 7.95e-02 & --- & 8.85e-02 & --- & 1.27e-01 & --- \\
& 64 & 9.41e-02 & 1.06 & 2.04e-02 & 1.96 & 2.25e-02 & 1.98 & 3.36e-02 & 1.92 \\
& 256 & 4.65e-02 & 1.02 & 5.14e-03 & 1.99 & 5.64e-03 & 1.99 & 8.51e-03 & 1.98 \\
& 1024 & 2.32e-02 & 1.00 & 1.29e-03 & 2.00 & 1.41e-03 & 2.00 & 2.13e-03 & 1.99 \\
& 4096 & 1.16e-02 & 1.00 & 3.22e-04 & 2.00 & 3.53e-04 & 2.00 & 5.34e-04 & 2.00 \\
& 16384 & 5.79e-03 & 1.00 & 8.05e-05 & 2.00 & 8.82e-05 & 2.00 & 1.34e-04 & 2.00 \\
& 65536 & 2.89e-03 & 1.00 & 2.01e-05 & 2.00 & 2.21e-05 & 2.00 & 3.34e-05 & 2.00 \\
\hline
\multirow{6}{*}{1}
& 16 & 3.47e-02 & --- & 2.77e-03 & --- & 5.91e-03 & --- & 8.02e-03 & --- \\
& 64 & 8.85e-03 & 1.97 & 5.22e-04 & 2.40 & 8.59e-04 & 2.78 & 9.73e-04 & 3.04 \\
& 256 & 2.22e-03 & 1.99 & 7.47e-05 & 2.80 & 1.14e-04 & 2.92 & 1.21e-04 & 3.01 \\
& 1024 & 5.56e-04 & 2.00 & 9.69e-06 & 2.95 & 1.44e-05 & 2.98 & 1.51e-05 & 3.00 \\
& 4096 & 1.39e-04 & 2.00 & 1.22e-06 & 2.99 & 1.81e-06 & 2.99 & 1.89e-06 & 3.00 \\
& 16384 & 3.48e-05 & 2.00 & 1.53e-07 & 3.00 & 2.27e-07 & 3.00 & 2.36e-07 & 3.00 \\
\hline
\multirow{5}{*}{2}
& 16 & 4.51e-03 & --- & 2.37e-04 & --- & 3.44e-04 & --- & 6.25e-04 & --- \\
& 64 & 5.73e-04 & 2.98 & 1.19e-05 & 4.32 & 1.95e-05 & 4.14 & 4.24e-05 & 3.88 \\
& 256 & 7.20e-05 & 2.99 & 6.97e-07 & 4.09 & 1.19e-06 & 4.04 & 2.72e-06 & 3.97 \\
& 1024 & 9.01e-06 & 3.00 & 4.28e-08 & 4.02 & 7.37e-08 & 4.01 & 1.71e-07 & 3.99 \\
& 4096 & 1.13e-06 & 3.00 & 2.68e-09 & 4.00 & 4.60e-09 & 4.00 & 1.07e-08 & 4.00 \\
\hline
\multirow{4}{*}{3}
& 16 & 2.20e-04 & --- & 1.95e-05 & --- & 1.98e-05 & --- & 5.51e-05 & --- \\
& 64 & 1.39e-05 & 3.98 & 7.80e-07 & 4.64 & 8.14e-07 & 4.61 & 1.68e-06 & 5.04 \\
& 256 & 8.70e-07 & 4.00 & 2.65e-08 & 4.88 & 2.78e-08 & 4.87 & 5.21e-08 & 5.01 \\
& 1024 & 5.44e-08 & 4.00 & 8.73e-10 & 4.92 & 9.16e-10 & 4.92 & 1.63e-09 & 5.00 \\
\hline
\end{tabular}

  \medskip
  \caption{Errors and rates for the problem from Section~\ref{sec:exp2} with test norm
  $\norm\cdot{V}=\VnormQopt\cdot$.}
  \label{tab:exp2:opt}
\end{table}

\begin{table}[htb]
  \centering
  \begin{tabular}{|c|c|cc|cc|cc|cc|}
\hline
$p$ & $\#\TT$ & $\norm{u-u_h}{}$ & rate & $\norm{\Pi^p u-u_h}{}$ & rate & $\norm{u-u_h^+}{}$ & rate & $\norm{u-\widetilde u_h}{}$ & rate
\\ \hline\hline
\multirow{7}{*}{0}
& 16 & 4.37e-01 & --- & 3.98e-01 & --- & 4.15e-01 & --- & 4.00e-01 & --- \\
& 64 & 2.25e-01 & 0.96 & 2.06e-01 & 0.95 & 2.14e-01 & 0.96 & 2.06e-01 & 0.96 \\
& 256 & 1.14e-01 & 0.99 & 1.04e-01 & 0.98 & 1.08e-01 & 0.99 & 1.04e-01 & 0.99 \\
& 1024 & 5.70e-02 & 1.00 & 5.21e-02 & 1.00 & 5.41e-02 & 1.00 & 5.21e-02 & 1.00 \\
& 4096 & 2.85e-02 & 1.00 & 2.61e-02 & 1.00 & 2.71e-02 & 1.00 & 2.61e-02 & 1.00 \\
& 16384 & 1.43e-02 & 1.00 & 1.30e-02 & 1.00 & 1.35e-02 & 1.00 & 1.30e-02 & 1.00 \\
& 65536 & 7.13e-03 & 1.00 & 6.52e-03 & 1.00 & 6.77e-03 & 1.00 & 6.52e-03 & 1.00 \\
\hline
\multirow{6}{*}{1}
& 16 & 6.23e-02 & --- & 5.18e-02 & --- & 5.69e-02 & --- & 1.64e-02 & --- \\
& 64 & 1.63e-02 & 1.93 & 1.37e-02 & 1.92 & 1.49e-02 & 1.93 & 5.70e-03 & 1.52 \\
& 256 & 4.12e-03 & 1.98 & 3.47e-03 & 1.98 & 3.77e-03 & 1.98 & 1.61e-03 & 1.83 \\
& 1024 & 1.03e-03 & 2.00 & 8.67e-04 & 2.00 & 9.42e-04 & 2.00 & 4.19e-04 & 1.94 \\
& 4096 & 2.57e-04 & 2.00 & 2.17e-04 & 2.00 & 2.35e-04 & 2.00 & 1.06e-04 & 1.98 \\
& 16384 & 6.43e-05 & 2.00 & 5.41e-05 & 2.00 & 5.88e-05 & 2.00 & 2.68e-05 & 1.99 \\
\hline
\multirow{5}{*}{2}
& 16 & 7.46e-03 & --- & 5.95e-03 & --- & 6.56e-03 & --- & 9.34e-04 & --- \\
& 64 & 9.32e-04 & 3.00 & 7.35e-04 & 3.02 & 8.17e-04 & 3.00 & 6.23e-05 & 3.91 \\
& 256 & 1.17e-04 & 3.00 & 9.17e-05 & 3.00 & 1.02e-04 & 3.00 & 4.01e-06 & 3.96 \\
& 1024 & 1.46e-05 & 3.00 & 1.15e-05 & 3.00 & 1.28e-05 & 3.00 & 2.57e-07 & 3.96 \\
& 4096 & 1.82e-06 & 3.00 & 1.43e-06 & 3.00 & 1.59e-06 & 3.00 & 1.66e-08 & 3.95 \\
\hline
\multirow{4}{*}{3}
& 16 & 6.03e-04 & --- & 5.62e-04 & --- & 5.59e-04 & --- & 7.46e-05 & --- \\
& 64 & 3.88e-05 & 3.96 & 3.63e-05 & 3.95 & 3.64e-05 & 3.94 & 3.93e-06 & 4.25 \\
& 256 & 2.44e-06 & 3.99 & 2.28e-06 & 3.99 & 2.29e-06 & 3.99 & 2.41e-07 & 4.03 \\
& 1024 & 1.52e-07 & 4.00 & 1.42e-07 & 4.00 & 1.43e-07 & 4.00 & 1.52e-08 & 3.99 \\
\hline
\end{tabular}

  \medskip
  \caption{Errors and rates for the problem from Section~\ref{sec:exp2} with test norm
  $\norm\cdot{V}=\VnormSimple\cdot$.}
  \label{tab:exp2:std}
\end{table}

Define $T_1 := \operatorname{conv}\{(0,0),(1,0),(\tfrac12,\tfrac12)\}$, $T_2 
:=\operatorname{conv}\{(1,1),(0,1),(\tfrac12,\tfrac12)\}$.
In the first example we set $\bbeta = 0$ and
\begin{align*}
  \gamma(x,y) := \begin{cases}
    1 & (x,y) \in T_1, \\
    \tfrac12 & (x,y) \in T_2 , \\
    0 & (x,y)\in \Omega\setminus(T_1\cup T_2).
  \end{cases}
\end{align*}
Moreover, we choose 
\begin{align*}
  \ff(x,y):= \begin{cases}
    \begin{pmatrix}
      1 \\ 1
    \end{pmatrix}
    & x<\tfrac12, \\
    \begin{pmatrix}
      1 \\ -1
    \end{pmatrix}
    & x\geq\tfrac12.
  \end{cases}
\end{align*}
Note that $\div\ff = 0$ and $\ff\in \Hdivset\Omega\cap\HH^1(\TT_1)$.
With the coefficients, $\ff$ and the exact solution at hand, we calculate the right-hand side $f$ and $\ssigma$
through~\eqref{eq:model}.

Table~\ref{tab:exp1:opt} resp. Table~\ref{tab:exp1:std} show errors and convergence rates when
using the test norm $\VnormQopt\cdot$ resp. $\VnormStd\cdot=\VnormSimple\cdot$.
We observe higher convergence rates as predicted by our main results.

\subsection{Example 2}\label{sec:exp2}
For this example we choose $\ff = 0$, $\gamma = 0$, $\bbeta(x,y) = (1,1)^T$ for $(x,y)\in \Omega$. 
Note that $\bbeta$ is smooth.
Again we calculate $f$ and $\ssigma$ through~\eqref{eq:model}. 
Table~\ref{tab:exp2:opt} resp. Table~\ref{tab:exp2:std} show the results for Case~\ref{case:a} ($\norm\cdot{V} =
\VnormQopt\cdot$) resp. Case~\ref{case:b},~\ref{case:c} ($\norm\cdot{V}=\VnormStd\cdot=\VnormSimple\cdot$).
Observe from Table~\ref{tab:exp2:std} that we do not get higher convergence rates neither for solutions from the
augmented space $U_{hp}^+$ nor for the postprocessed solution. 
Even for the $L^2$ error of $\Pi^p u-u_h$ we do not get higher rates, whereas 
with the use of the quasi-optimal test norm $\VnormQopt\cdot$ higher rates are obtained.
This demonstrates that the assumption $\bbeta=0$ in Section~\ref{sec:ass} for the Cases~\ref{case:b}--\ref{case:c}
is not an artefact used in the proofs but in general is also necessary to obtain superconvergence results with the norms
$\VnormStd\cdot$, $\VnormSimple\cdot$.

\section{Concluding remarks}\label{sec:remarks}
We conclude this work with some remarks.
The results and their proofs are presented in a systematic way that allow 
to extend and transfer them to other types of meshes and different model problems.
In principle, the crucial results Lemma~\ref{lem:w:rep} and Lemma~\ref{lem:est} have to be verified.
Consider for instance that $\TT$ is a mesh with polygonal elements.
Lemma~\ref{lem:w:rep} still holds true in that case since it is independent of the underlying mesh so that 
only the assertion of Lemma~\ref{lem:est} has to be shown.
To be more precise: Analyzing the proof one finds out that it only remains to provide the estimate
\begin{align*}
  \min_{\ww_h\in U_h}\norm{\ww-\ww_h}U + \min_{\vv_k\in V_{hk}}\norm{\vv-\vv_k}V \lesssim h \norm{g}{},
\end{align*}
which is an optimal a priori error bound for sufficient regular functions (see Lemma~\ref{lem:est} for details on the definition of
the functions $\ww$ and $\vv$).
In the case of triangular meshes we have proven the estimate by using basic properties of well-known interpolation
operators. 
If operators with the same properties can be defined on meshes with polygonal elements, then, clearly, the estimate
holds true as well.
We note that the analysis of DPG methods for ultra-weak formulations on general (polygonal) meshes is an ongoing research.
For an overview we refer to the recent work~\cite{PolyDPG}.

Future research will include other model problems, e.g., linear elasticity.
Another possible application of the developed ideas could be to the Stokes problem. Consider its
velocity-gradient-pressure formulation: Find $(\uu_S,\ssigma_S,p_S)$ such that
\begin{alignat*}{2}
  -\nabla p_S+\div\ssigma_S &= \ff &\quad&\text{in }\Omega, \\
  \ssigma_S-\nabla\uu_S &= 0 &\quad&\text{in }\Omega, \\
  \div\uu_S &= 0 &\quad&\text{in }\Omega, \\
  \uu_S &= 0 &\quad&\text{on }\partial\Omega.
\end{alignat*}
DPG methods based on ultra-weak formulations are known and thoroughly analyzed~\cite{DPGStokes}. 
Since regularity theory is also known, our main results (Theorem~\ref{thm:L2}--\ref{thm:postproc}) should carry over (for the
velocity variable $\uu_S$ instead of $u$) to the Stokes problem following the same lines in the proofs.
In particular, the assertion of Theorem~\ref{thm:L2} has been already observed in numerical
experiments~\cite[Section~3]{DPGStokes} even for different test norms.
We refer also to~\cite[Section~3]{DPGNavierStokes} for numerical evidence in the case of incompressible Navier Stokes
problems.

Another point we like to mention is that the principal ideas of the proofs and, thus, our main results carry
over to the low regularity case, i.e., when we do not have the ``full'' regularity $u\in H^2(\Omega)$, $v\in H^2(\Omega)$
for solutions of~\eqref{eq:model} and~\eqref{eq:adjoint} but rather $u\in H^{1+s}(\Omega)$, $v\in H^{1+s}(\Omega)$ for
some $s\in(\tfrac12,1)$. This is usually the case when $\Omega$ is a nonconvex polygonal domain.
Nevertheless, we stress that our main results (Theorem~\ref{thm:L2}--\ref{thm:postproc})
hold true with $h^{p+2}$ replaced by $h^{p+1+s}$. Therefore, one still obtains higher convergence rates than the overall
error $\norm{\uu-\uu_h}{}=\OO(h^{p+1})$.
For the particular case of a reaction-diffusion model problem ($\CC$ is the identity, $\bbeta=0$, and $\gamma=1$)
Theorem~\ref{thm:augment} and~\ref{thm:postproc} are analyzed in~\cite{SupconvDPG} for $\norm\cdot{V} =
\VnormStd\cdot=\VnormSimple\cdot$.

Finally, let us remark the importance of the choice of norms in the test space. 
Although all test norms under consideration are equivalent and, thus, the
corresponding DPG methods have the same stability properties (i.e., the $\inf$--$\sup$ constants resp. boundedness
constants are equivalent), only one of the norms under consideration (the quasi-optimal norm $\VnormQopt\cdot$) 
yields higher convergence rates for general model problems with $\bbeta\neq 0$.
This has to be taken into account in the design of DPG methods.

\bibliographystyle{abbrv}
\bibliography{literature}

\begin{thebibliography}{10}

\bibitem{BoumaGH_DPGconvRates}
T.~Bouma, J.~Gopalakrishnan, and A.~Harb.
\newblock Convergence rates of the {DPG} method with reduced test space degree.
\newblock {\em Comput. Math. Appl.}, 68(11):1550--1561, 2014.

\bibitem{DPGaposteriori}
C.~Carstensen, L.~Demkowicz, and J.~Gopalakrishnan.
\newblock A posteriori error control for {DPG} methods.
\newblock {\em SIAM J. Numer. Anal.}, 52(3):1335--1353, 2014.

\bibitem{breakSpace}
C.~Carstensen, L.~Demkowicz, and J.~Gopalakrishnan.
\newblock Breaking spaces and forms for the {DPG} method and applications
  including {M}axwell equations.
\newblock {\em Comput. Math. Appl.}, 72(3):494--522, 2016.

\bibitem{CockburnDG_superconvergence}
B.~Cockburn, B.~Dong, and J.~Guzm\'an.
\newblock A superconvergent {LDG}-hybridizable {G}alerkin method for
  second-order elliptic problems.
\newblock {\em Math. Comp.}, 77(264):1887--1916, 2008.

\bibitem{partI}
L.~Demkowicz and J.~Gopalakrishnan.
\newblock A class of discontinuous {P}etrov-{G}alerkin methods. {P}art {I}: the
  transport equation.
\newblock {\em Comput. Methods Appl. Mech. Engrg.}, 199(23-24):1558--1572,
  2010.

\bibitem{DemkowiczG_11_ADM}
L.~Demkowicz and J.~Gopalakrishnan.
\newblock Analysis of the {DPG} method for the {Poisson} problem.
\newblock {\em SIAM J. Numer. Anal.}, 49(5):1788--1809, 2011.

\bibitem{partII}
L.~Demkowicz and J.~Gopalakrishnan.
\newblock A class of discontinuous {P}etrov-{G}alerkin methods. {II}. {O}ptimal
  test functions.
\newblock {\em Numer. Methods Partial Differential Equations}, 27(1):70--105,
  2011.

\bibitem{partIII}
L.~Demkowicz, J.~Gopalakrishnan, and A.~H. Niemi.
\newblock A class of discontinuous {P}etrov-{G}alerkin methods. {P}art {III}:
  {A}daptivity.
\newblock {\em Appl. Numer. Math.}, 62(4):396--427, 2012.

\bibitem{SupconvDPG}
T.~F\"uhrer.
\newblock Superconvergence in a {DPG} method for an ultra-weak formulation.
\newblock {\em Comput. Math. Appl.}, 2017.
\newblock in print, preprint: {arXiv}:1707.06979.

\bibitem{practicalDPG}
J.~Gopalakrishnan and W.~Qiu.
\newblock An analysis of the practical {DPG} method.
\newblock {\em Math. Comp.}, 83(286):537--552, 2014.

\bibitem{grisvard}
P.~Grisvard.
\newblock {\em Elliptic problems in nonsmooth domains}, volume~24 of {\em
  Monographs and Studies in Mathematics}.
\newblock Pitman (Advanced Publishing Program), Boston, MA, 1985.

\bibitem{DPGstar}
B.~Keith, L.~Demkowicz, and J.~Gopalakrishnan.
\newblock {DPG$^*$} {M}ethod.
\newblock {arXiv}:1710.05223, ar{X}iv.org, 2017.

\bibitem{GoalOrientedDPG}
B.~Keith, A.~Vaziri~Astaneh, and L.~Demkowicz.
\newblock Goal-oriented adaptive mesh refinement for non-symmetric functional
  settings.
\newblock {arXiv}:1711.01996, ar{X}iv.org, 2017.

\bibitem{constrFortin}
S.~Nagaraj, S.~Petrides, and L.~F. Demkowicz.
\newblock Construction of {DPG} {F}ortin operators for second order problems.
\newblock {\em Comput. Math. Appl.}, 74(8):1964--1980, 2017.

\bibitem{DPGStokes}
N.~V. Roberts, T.~Bui-Thanh, and L.~Demkowicz.
\newblock The {DPG} method for the {S}tokes problem.
\newblock {\em Comput. Math. Appl.}, 67(4):966--995, 2014.

\bibitem{DPGNavierStokes}
N.~V. Roberts, L.~Demkowicz, and R.~Moser.
\newblock A discontinuous {P}etrov-{G}alerkin methodology for adaptive
  solutions to the incompressible {N}avier-{S}tokes equations.
\newblock {\em J. Comput. Phys.}, 301:456--483, 2015.

\bibitem{stenbergPostPr}
R.~Stenberg.
\newblock Postprocessing schemes for some mixed finite elements.
\newblock {\em RAIRO Mod\'el. Math. Anal. Num\'er.}, 25(1):151--167, 1991.

\bibitem{PolyDPG}
A.~Vaziri~Astaneh, F.~Fuentes, J.~Mora, and L.~Demkowicz.
\newblock High-order polygonal discontinuous {P}etrov-{G}alerkin ({P}oly{DPG})
  methods using ultraweak formulations.
\newblock {arXiv}:1706.06754v2, ar{X}iv.org, 2017.

\bibitem{partIV}
J.~Zitelli, I.~Muga, L.~Demkowicz, J.~Gopalakrishnan, D.~Pardo, and V.~M. Calo.
\newblock A class of discontinuous {P}etrov-{G}alerkin methods. {P}art {IV}:
  the optimal test norm and time-harmonic wave propagation in 1{D}.
\newblock {\em J. Comput. Phys.}, 230(7):2406--2432, 2011.

\end{thebibliography}

\end{document}